\newtheorem{theorem}{Theorem}[section]
\newtheorem{introthm}{Theorem}[section]
\newtheorem{lemma}[theorem]{Lemma}
\newtheorem{proposition}[theorem]{Proposition}
\newtheorem{corollary}[theorem]{Corollary}
\newtheorem{problem}[theorem]{Problem}
\newtheorem{question}[theorem]{Question}
\newtheorem*{theorem*}{Theorem}
\newlist{steps}{enumerate}{1}
\setlist[steps, 1]{label = Step \arabic*:}
\theoremstyle{definition}
\newtheorem{definition}[theorem]{Definition}
\theoremstyle{remark}
\newtheorem{remark}[theorem]{Remark}
\newtheorem{example}[theorem]{Example}
\newcommand{\Bc}{\mathcal{B}}
\DeclareMathOperator{\reg}{reg}
\DeclareMathOperator{\bc}{bc}
\DeclareMathOperator{\sgn}{sgn}
\DeclareMathOperator{\brank}{brank}
\DeclareMathOperator{\indmatch}{indmatch}
\DeclareMathOperator{\fc}{fc}
\numberwithin{equation}{section}
\title{Boolean Matrix Rank via Monomial Ideals}
\author[]{Juliann Geraci}
\address{University of Nebraska--Lincoln, 203 Avery Hall, Lincoln, NE 68588, United States}
\email{jgeraci2@huskers.unl.edu}
\author[]{Alexander B. Kunin}
\address{Creighton University,  Hixson-Lied 504, Omaha, Nebraska 68178 , United States}
\email{alexkunin@creighton.edu}
\author[]{Alexandra Seceleanu}
\address{University of Nebraska--Lincoln, 203 Avery Hall, Lincoln, NE 68588, United States}
\email{aseceleanu@unl.edu}
\date{\today}
\begin{document}
\begin{abstract}
Boolean matrix factorization (BMF) has many applications in data mining, bioinformatics, and network analysis. The goal of BMF is to decompose a given binary matrix as the Boolean product of two smaller binary matrices, revealing underlying structure in the data. When interpreting a binary matrix as the adjacency matrix of a bipartite graph, BMF is equivalent to the NP-hard biclique cover problem.

By approaching this problem through the lens of commutative algebra, we utilize algebraic structures and techniques--particularly the Castelnuovo-Mumford regularity of combinatorially defined ideals--to establish new lower bounds for Boolean matrix rank. 
\end{abstract}
\maketitle

\section{Introduction}

We introduce Boolean matrix factorization with a motivating example. Consider course enrollment in the Mathematics Department represented as a binary matrix. The rows of the matrix indicates which students enroll in which courses. Naturally, the courses are divided into specialty areas. Student X, interested in homological algebra, has to take the courses $\{$Commutative Algebra, Algebraic Topology$\}$ and Student Y, interested in combinatorial commutative algebra, has to take $\{$Commutative Algebra, Discrete Mathematics$\}$. Student Z who is interested in combining homological algebra and combinatorial commutative algebra should take $\{$Commutative Algebra, Algebraic Topology, Discrete Mathematics$\}$. The main point here is that student Z should only take Commutative Algebra once, thus the set union operation is more appropriate for describing the data from basis vectors as opposed to addition. Thus, Boolean operations are the right ones to consider when analyzing the data.

Once data is encoded as a Boolean matrix, Boolean Matrix Factorization (BMF) is a technique used to decompose binary matrices into smaller binary matrices, revealing hidden patterns within the data. It has found wide applications in fields such as recommender systems, bioinformatics, and computer vision. However, due to its computational complexity \cite{ORLIN1977406}, solving BMF efficiently for large datasets remains a significant challenge. To address this, various approximation methods have been developed, broadly categorized into combinatorial optimization-based approaches and continuous optimization algorithms. While continuous optimization methods relax the binary constraint and optimize the matrices in a continuous space, this paper focuses on algebraic methods inspired by the combinatorial optimization-based techniques.

Combinatorial optimization approaches to BMF leverage discrete structures to develop efficient approximations.  Notable methods such as\\ GRECOND+\cite{BELOHLAVEK}, PANDA+\cite{Lucchese}, and ASSO \cite{Miettinen08} each employ distinct strategies to reduce reconstruction error while managing computational cost. These methods offer significant improvements over exact algorithms, which struggle with scalability for large datasets. By utilizing heuristics and iterative refinement, these algorithms strike a balance between factorization quality and computational cost.

Boolean rank encodes the optimal result of BMF, see Definition \ref{def:brank}. In this paper we introduce two algebraic lower bounds for the Boolean rank of a matrix, both expressed in terms of Castelnuovo-Mumford regularity -- an invariant measuring homological complexity in the theory of graded rings. Along the way, we associate to a Boolean matrix various algebraic and combinatorial structures: a bipartite graph, its edge ideal, a new simplicial complex termed the \textit{isolation complex} and its corresponding monomial ideal dubbed the \textit{isolation ideal}. These structures provide new insights into Boolean matrix properties.

It is well-known that the Boolean rank of a binary matrix corresponds to the biclique cover number of its associated bipartite graph. Based on  this interpretation, we prove  in section \ref{s: biclique} that the Castelnuovo-Mumford regularity of the edge ideal of a bipartite graph associated with a Boolean matrix gives a lower bound on its Boolean rank.

\begin{introthm}[Theorem \ref{thm: reg}]
For a bipartite graph $G$ with edge ideal $I_G$ and adjacency matrix $A$, the regularity of the quotient ring of $I_G$ gives a lower bound on the Boolean rank of $A$, namely 
\[
\reg(R/I_G)\leq \brank(A).
\]

\end{introthm}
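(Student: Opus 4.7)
The plan is to combine two ingredients: the classical equivalence between Boolean rank and the biclique cover number, and a known regularity bound via co-chordal covers. First, I would recall the standard equivalence $\brank(A) = \bc(G)$, where $\bc(G)$ is the minimum number of complete bipartite subgraphs (bicliques) whose union covers the edges of $G$. This reduces the theorem to proving the inequality $\reg(R/I_G) \leq \bc(G)$.

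Next, I would verify that each biclique is co-chordal, meaning its complement is chordal. Indeed, the complement of $K_{m,n}$ on its own vertex set is the disjoint union $K_m \sqcup K_n$, whose longest induced cycles have length three. Consequently, any biclique cover of $G$ of size $\bc(G)$ is automatically a co-chordal cover of the same size.

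The final step is to invoke Woodroofe's theorem: if $G = G_1 \cup \cdots \cup G_t$ with each $\overline{G_i}$ chordal, then $\reg(R/I_G) \leq t$. Combining this with the previous two steps yields $\reg(R/I_G) \leq \bc(G) = \brank(A)$, as desired.

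The principal obstacle is the algebraic content of this last step, which is nontrivial and relies on prior literature (Woodroofe; Dao--Huneke--Schweig). If a self-contained proof is preferred, one would induct on $t$ using the Mayer--Vietoris short exact sequence
\[
0 \to R/(I_H \cap I_{G_t}) \to R/I_H \oplus R/I_{G_t} \to R/(I_H + I_{G_t}) \to 0,
\]
where $H = G_1 \cup \cdots \cup G_{t-1}$, together with the standard consequence $\reg(R/(I+J)) \leq \max\{\reg(R/I),\; \reg(R/J),\; \reg(R/(I \cap J)) - 1\}$ and Fr\"oberg's theorem (which gives the base case $\reg(R/I_B) = 1$ for any biclique $B$, since $\overline{B}$ is chordal). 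The delicate point is controlling $\reg(R/(I_H \cap I_{G_t}))$ at each inductive step, which requires a combinatorial description of the intersection and a reduction to a smaller instance; this is the technical heart of Woodroofe's argument.
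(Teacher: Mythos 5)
Your argument is correct, but it takes a genuinely different route from the paper's. Both proofs begin identically, reducing via $\brank(A)=\bc(G)$ to the inequality $\reg(R/I_G)\le \bc(G)$. From there the paper writes $I_G=\sum_{\ell=1}^r I_{C_\ell}$ for a minimal biclique cover and applies the subadditivity of regularity for sums of monomial ideals (Proposition~\ref{thm: reg sum}, due to Herzog) together with the Corso--Nagel computation $\reg(R/I_{C})=1$ for a biclique (Lemma~\ref{lem: regularity biclique}); this black-boxes exactly the Mayer--Vietoris bookkeeping you flag as the technical heart of your fallback argument, so no induction on the cover is needed. You instead observe that bicliques are co-chordal and invoke Woodroofe's theorem $\reg(R/I_G)\le\mathrm{cochord}(G)$, a reference the paper itself points to as related work just before Theorem~\ref{thm: reg}. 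Your route yields the a priori sharper intermediate bound $\reg(R/I_G)\le\mathrm{cochord}(G)\le\bc(G)$; in fact, since by Fr\"oberg's theorem and Theorem~\ref{thm:FerrersIdealReg} the co-chordal bipartite subgraphs without isolated vertices are precisely the Ferrers graphs, your argument already subsumes the Ferrers-cover refinement \eqref{eq: ferrers inequality} proposed in the paper's conclusion. The trade-off is that the paper's subadditivity argument is more self-contained relative to its own toolkit and transfers verbatim to that refinement by swapping one lemma, whereas yours imports a stronger external theorem whose proof is itself the induction you sketch. Either way, your main line of argument is complete; only the optional ``self-contained'' variant at the end is left with an acknowledged gap, and it is not needed.
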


 The isolation number of a Boolean matrix-- the size of the largest set of isolated ones in the matrix-- also serves as a lower bound for Boolean rank. In section \ref{s: isolated ones} we construct a monomial ideal, which we call the \textit{isolation ideal}, whose Castelnuovo-Mumford regularity recovers the isolation number of a Boolean matrix. This yields a second lower bound on Boolean rank.

 \begin{introthm}[Theorem \ref{thm: isolation number = reg}]
Let $A$ be a Boolean matrix. Then the regularity of the quotient ring $R[A]/J_A$ of the isolation ideal $J_A$ gives a lower bound on the Boolean rank of $A$, namely
\[
\reg(R[A]/J_A)\leq \brank(A).
\]
\end{introthm}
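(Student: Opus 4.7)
The plan is to establish the stronger equality $\reg(R[A]/J_A) = \iota(A)$, where $\iota(A)$ denotes the isolation number of $A$, and then invoke the classical bound $\iota(A) \leq \brank(A)$ to conclude. The first step is to verify that $J_A$ is a squarefree monomial ideal, so that $R[A]/J_A$ is the Stanley-Reisner ring of a simplicial complex $\Delta_A$ (the isolation complex) on the set of $1$-entries of $A$. I would then analyze $\Delta_A$ combinatorially so that its facial data records the maximal isolated sets of ones in $A$, yielding $\dim(\Delta_A) = \iota(A) - 1$ and aligning the homological data with the isolation structure.

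With the Stanley-Reisner description in hand, I would compute $\reg(R[A]/J_A)$ via Hochster's formula
\[
\beta_{i,i+j}(R[A]/J_A) = \sum_{|W|=j} \dim_k \widetilde{H}_{j-i-1}(\Delta_A|_W; k).
\]
For the lower bound $\reg(R[A]/J_A) \geq \iota(A)$, I would exhibit, for each maximal isolated set $S$, an induced subcomplex $\Delta_A|_W$ whose reduced homology is nontrivial in a degree corresponding to $|S|$; a natural candidate is a subcomplex combinatorially equivalent to the boundary of a simplex on $S$, producing a sphere. For the upper bound $\reg(R[A]/J_A) \leq \iota(A)$, I would argue that any nontrivial Hochster contribution forces $W$ to contain a pairwise isolated subset of size matching its contribution. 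An alternative route is Terai's duality $\reg(J_A) = \mathrm{pd}(R[A]/J_A^\vee)$, which transfers the upper bound to a projective dimension computation on the Alexander dual that often admits cleaner combinatorial control.

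Finally, I would combine the equality with the straightforward observation that $\iota(A) \leq \brank(A)$: in any biclique cover of the bipartite graph associated with $A$, two distinct isolated $1$-entries cannot lie in a common biclique, for otherwise the four corners of the resulting rectangle of ones would contradict their isolation. The main obstacle I anticipate is the upper bound $\reg(R[A]/J_A) \leq \iota(A)$, since one must control the homology of \emph{every} induced subcomplex of $\Delta_A$, not merely those arising from maximal isolated sets. Overcoming this likely rests on a structural property of $\Delta_A$ such as shellability or vertex-decomposability, or on a clean combinatorial description of the Alexander dual $J_A^\vee$ that makes Terai's duality directly applicable.
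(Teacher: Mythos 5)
There is a genuine gap at the very first step: $J_A$ is \emph{not} a squarefree monomial ideal, so $R[A]/J_A$ is not the Stanley--Reisner ring of $\Delta_A$ and Hochster's formula does not apply to it. By definition $J_A = I_{\Delta_A} + \left(x_{ij}^2 : a_{ij}=1\right)$ contains the square of every variable. This is not a cosmetic difference; it is the whole point of the construction. The regularity of the actual Stanley--Reisner ring $k[A]/I_{\Delta_A}$ is in general strictly smaller than $\iota(A)$: for example, if $A$ is the identity matrix then $\Delta_A$ is the full simplex on its $n$ diagonal ones, $I_{\Delta_A}=0$, and $\reg\left(k[A]/I_{\Delta_A}\right)=0$, while $\iota(A)=n$. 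For the same reason your proposed lower-bound argument fails: the induced subcomplex of $\Delta_A$ on a maximal isolated set $S$ is the full simplex on $S$ (since $S$ is a face), not the boundary of a simplex, so it has vanishing reduced homology and contributes nothing in Hochster's formula. Your worry about the upper bound was therefore aimed at the wrong difficulty.

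The correct route, and the one the paper takes, exploits the squares: since $J_A$ contains $x_{ij}^2$ for every variable, $S=k[A]/J_A$ is Artinian, and for an Artinian graded quotient of a polynomial ring the regularity equals the top nonzero degree $\max\{i \mid S_i \neq 0\}$ (\cite[Theorem 18.4]{Peeva}). The nonzero monomials of $S$ are exactly the squarefree monomials not in $I_{\Delta_A}$, i.e., those indexed by isolated sets, so the top degree is the maximum cardinality of an isolated set, which is $\iota(A)$. This gives $\reg(S)=\iota(A)$ with no homology computation at all. Your final step, $\iota(A)\leq \brank(A)$ because two isolated ones cannot lie in a common biclique of a cover, is correct and matches the paper (which cites \cite[Prop.\,2]{DeSantis21} for it).
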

 
  The paper concludes with a section \ref{s: computations} detailing a computational analysis of several matrix structures (block matrices, overlapping blocks, and identity complement matrices), highlighting scenarios where our algebraic bounds closely estimate the Boolean rank and cases where they diverge significantly. 

\section{Preliminaries}
This section provides foundational definitions in commutative algebra and Boolean matrix factorization, both of which are central to understanding the theoretical framework of the paper. Commutative algebra deals with the properties of commutative rings and their modules, offering a rich structure for analyzing algebraic systems.
\subsection{The Boolean Matrix Factorization Problem}
Before stating the problem, we introduce the algebraic structure underlying it.

\begin{definition}
The \textit{Booleans}, $\Bc$, are defined to be the semiring on $\{0,1\}$ equipped with operations $\vee$ (logical ''or") and $\wedge$ (logical ''and") specified in the tables below:
\[
\begin{tabular}{c|cc}
    $\vee$ & 0 & 1 \\
    \hline
    0 & 0 & 1\\
    1 & 1 & 1
\end{tabular}
\qquad
\begin{tabular}{c|cc}
    $\wedge$ & 0 & 1 \\
    \hline
    0 & 0 & 0\\
    1 & 0 & 1
\end{tabular}
\]
\end{definition}

Note that $\Bc$ is \textit{not} a field, or even a ring, as there is no additive inverse. This fact makes the notion of Boolean rank as well as the related problem of finding matrix factorizations for matrices with entries in $\Bc$ difficult as many standard matrix factorization techniques fail to work outside the realm of rings and fields.


Denote by $\Bc^{m,n}$ the set of $m\times n$ matrices with entries in $\Bc$. For matrices $V\in \Bc^{m,r}, H\in \Bc^{r,n}$ we write $V\wedge H$ to mean Boolean matrix multiplication. This is defined by analogy with matrix multiplication over a ring, with addition replaced by $\vee$ and multiplication by $\wedge$, that is,
\[
[V\wedge H]_{ij}=\bigvee_{\ell=1}^r V_{i\ell}\wedge H_{\ell j}.
\]

The goal of matrix decomposition is to factorize an input matrix into two smaller factor matrices, whose product reconstructs the original matrix. Here we give a precise statement of the Boolean Matrix Factorization (BMF) problem.

\begin{problem}
Given a matrix $A\in \Bc^{m,n}$, find matrices $V\in \Bc^{m,r}$ and $H\in \Bc^{r,n}$ such that $A = V\wedge H$ and $r$ is as small as possible.
\end{problem}

The answer to this problem is the Boolean rank.

\begin{definition}\label{def:brank}
    The \textit{Boolean rank} of an $m\times n$ binary matrix $A\in \Bc^{m,n}$ is 
    \[
    \brank(A)=\text{min}\{r \mid A = V\wedge H,V\in \Bc^{m,r}, H\in \Bc^{r,n} \}.
    \]
\end{definition}

Determining the Boolean rank is computationally challenging; however, significant effort has been devoted to approximating the factorization\cite{RecentDevs} \cite{DeSantis21}, and consequently, the rank itself.

\subsection{Commutative Algebra Background}
Let $R=k[x_1,\dots,x_n]$ denote a polynomial ring in $n$ variables, with $k$ a field. 
\begin{definition}
A \textit{free resolution} of a finitely generated $R$-module $M$ is a sequence of homomorphisms of $R$-modules
\[
F_\bullet = 0 \to F_r \xrightarrow{d_{r}} \ldots \xrightarrow{d_1} F_0 \xrightarrow{d_0} M \to 0
\]
such that $F_\bullet$ is exact, each $F_i$ is a finitely generated free $R$-module and  $M \cong F_0/\text{Im}(d_1)$.

The ring $R$ and the $R$-module $M$  are \textit{$\mathbb{N}$-graded} if there exits decompositions $R= \bigoplus_{i=0}^\infty R_i$  and  $M= \bigoplus_{i=0}^\infty M_i$ with $R_mR_n\subseteq R_{m+n}$ and  $R_mM_n\subseteq M_{m+n}$ for all $m,n\geq 0$. 
If $R$ and $M$ are graded, and the map $d_i$ preserves degrees, we call $F_\bullet$ a \textit{graded free resolution.}

A free resolution $F_\bullet$ is called a \textit{minimal free resolution} if 
\[
d_{i+1}(F_{i+1})\subseteq (x_1,\ldots,x_n)F_i \text{ for all } i\geq 0.
 \]
\end{definition}
This means that the matrices representing the the differential maps $d_i$ with respect to any homogeneous choice of bases contain no constant entries other than possibly zero.
\begin{definition}
Let $M$ be a graded $R$-module with minimal free resolution
\[
F_\bullet = 0 \to F_r \xrightarrow{d_{r}} \ldots \xrightarrow{d_1} F_0 \xrightarrow{d_0} M \to 0.
\]
The number of elements of degree $j$ in  any homogeneous basis of $F_i$, denoted $\beta^R_{i,j}(M)$, is called the {\em $i$-th graded Betti number of $M$ in  degree $j$}.
\end{definition}
We would like to know the degrees where the non-zero Betti numbers are located. While knowing that precise information is often not possible, it is useful to have an upper bound for degrees where the Betti numbers are non-zero. Such bounds are provided by the notion of \textit{regularity}.
\begin{definition}
The \textit{Castelnuovo-Mumford regularity} (or just \textit{regularity}) of  $M$ over $R$, denoted $\reg_R(M)$, is defined as
\[
\reg_R(M)=\text{max}\{j\mid \beta_{i,i+j}^R(M)\neq 0 \text{ for some $i$}\}.
\]
We often drop the subscript $R$, writing $\reg(M)$ when the ring $R$ is evident.
\end{definition}  
Since the regularity measures the degrees of elements involved in constructing a free resolution of $M$, it can be viewed as a measure for the complexity of computing such a free resolution, often referred to as the homological complexity of $M$. 

It follows from the definition that for a graded ideal $I$ of $R$ we have $\reg_R(I)=\reg_R(R/I)+1$, giving a translation between complexity of $I$ and $R/I$.

In this paper we study a special class of ideals called \textit{monomial ideals}. 
\begin{definition}
    An ideal $I$ of a polynomial ring is a \textit{monomial ideal} if it can be generated by monomials, that is $I = (f_1,\ldots,f_k)$, where the generators, $f_i$, are monomials for all $i$.
\end{definition}

Within the class of monomial ideals, regularity behaves in a  subaditive manner, as shown by the following result.

\begin{proposition}{\cite{Herzog}}\label{thm: reg sum}
    Let $I$ and $J$ be monomial ideals. Then
    \[
    \reg(I+J)\leq reg(I)+reg(J)
    \]
\end{proposition}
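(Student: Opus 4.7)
The plan is to exploit the standard short exact sequence
$$0 \to R/(I \cap J) \to R/I \oplus R/J \to R/(I+J) \to 0$$
and read off regularity bounds from the associated long exact sequence in Tor. The general estimate for the cokernel in a short exact sequence, $\reg(C) \leq \max\{\reg(B), \reg(A)-1\}$, applied here together with the identity $\reg(L) = \reg(R/L) + 1$ and the fact that $\reg(R/I \oplus R/J) = \max\{\reg(R/I), \reg(R/J)\}$, yields
$$\reg(I + J) \leq \max\{\reg(I),\ \reg(J),\ \reg(I \cap J) - 1\}.$$
Since $\reg(I)$ and $\reg(J)$ are each clearly bounded by $\reg(I) + \reg(J)$, the theorem reduces to establishing the estimate $\reg(I \cap J) \leq \reg(I) + \reg(J) + 1$.

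For this remaining inequality, I would exploit the combinatorial structure of monomial ideals. Writing $I = (f_1, \ldots, f_p)$ and $J = (g_1, \ldots, g_q)$, the intersection $I \cap J$ is the monomial ideal generated by the pairwise least common multiples $\mathrm{lcm}(f_i, g_j)$. I would induct on $p + q$ using the identity
$$(I' + (f_p)) \cap J = (I' \cap J) + f_p \cdot (J : f_p),$$
where $I' = (f_1, \ldots, f_{p-1})$. This expresses $I \cap J$ as a sum of a strictly smaller intersection, which is handled by the inductive hypothesis, and a new term arising from multiplication by the single generator $f_p$.

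The main obstacle is controlling the regularity of the new summand $f_p \cdot (J : f_p)$ in terms of $\reg(I)$ and $\reg(J)$. Multiplication by a monomial of degree $d$ shifts all degrees in a minimal free resolution by $d$, and since $d \leq \reg(I)$ for any minimal generator of $I$, this further reduces to bounding $\reg(J : f_p)$ by $\reg(J)$. For monomial ideals the colon admits an explicit combinatorial description (its minimal generators are the quotients of generators of $J$ by their $\gcd$ with $f_p$), and the resulting regularity comparison is known to hold, though it can fail for arbitrary graded ideals. Combining these ingredients with the inductive hypothesis and the sum bound above completes the proof. The monomial hypothesis is therefore essential precisely in the colon estimate, where the combinatorial rigidity of squarefree-type behavior provides the missing ingredient.
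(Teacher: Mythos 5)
The paper offers no proof of this proposition---it is quoted from Herzog's work on Taylor-type resolutions (the squarefree case is due to Kalai and Meshulam)---so your attempt has to stand on its own. Your opening move is fine: the Mayer--Vietoris sequence $0 \to R/(I\cap J) \to R/I \oplus R/J \to R/(I+J)\to 0$ does yield $\reg(I+J) \le \max\{\reg(I), \reg(J), \reg(I\cap J)-1\}$, and the identity $(I'+(f_p))\cap J = (I'\cap J) + f_p\,(J:f_p)$ is valid for monomial ideals since intersection distributes over sums there. But note that this reduction does not lower the difficulty: the intersection bound $\reg(I\cap J)\le \reg(I)+\reg(J)+1$ is a companion statement of essentially the same depth as the sum bound (Kalai--Meshulam prove the two together), whereas Herzog's actual argument constructs an explicit free resolution of $R/(I+J)$ by tensoring Taylor-like resolutions of $R/I$ and $R/J$ and reads the degree bound off that complex.

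The induction you propose for the intersection bound has two genuine gaps. First, the inductive hypothesis applied to $I'\cap J$ gives $\reg(I'\cap J)\le\reg(I')+\reg(J)+1$, and you need to replace $\reg(I')$ by $\reg(I)$; but deleting a minimal generator can \emph{increase} regularity---for example $I=(x^2,xy,y^2)$ has $\reg(I)=2$ while $I'=(x^2,y^2)$ has $\reg(I')=3$---so that replacement is not available and the induction does not close. Second, having written $I\cap J$ as a sum of two monomial ideals, you must bound the regularity of that sum: invoking subadditivity of regularity for sums is circular (it is the statement being proved), an additive bound would in any case give roughly $2(\reg(I)+\reg(J))$, which is far too large, and re-applying the Mayer--Vietoris max-bound reintroduces an intersection $(I'\cap J)\cap f_p\,(J:f_p)$ that is not smaller in your induction on the number of generators. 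Finally, the auxiliary claim $\reg(J:f_p)\le\reg(J)$ is doing real work and needs a proof or citation: for squarefree ideals it follows from Hochster's formula via links, and the general monomial case reduces to colons by single variables, but it is not obvious and you should not wave at it. As written, the argument does not terminate.
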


By induction the next statement follows.

\begin{corollary}\label{2.9}
    For a finite collection $\{I_\lambda\}$ of monomial ideals, 
    \[
    \reg\left(\sum_{\lambda} I_\lambda\right)\leq \reg(I_1)+\ldots + \reg(I_\lambda).
    \]
\end{corollary}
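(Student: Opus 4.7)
The plan is to proceed by induction on the number $n$ of ideals in the collection $\{I_\lambda\}$, using Proposition \ref{thm: reg sum} as the base case. The inductive framework is natural here because the inequality asserted in Proposition \ref{thm: reg sum} is stated only for two summands, and the statement of Corollary \ref{2.9} is precisely its iteration.

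For the base case $n = 2$, the bound $\reg(I_1 + I_2) \leq \reg(I_1) + \reg(I_2)$ is exactly the content of Proposition \ref{thm: reg sum}. For the inductive step, assume the inequality holds for any collection of $n-1$ monomial ideals, and let $I_1, \dots, I_n$ be monomial ideals. Set $J = I_1 + \dots + I_{n-1}$; since the sum of monomial ideals is again a monomial ideal (its generating set can be taken to be the union of generating sets of the summands, all of which are monomials), Proposition \ref{thm: reg sum} applies to the pair $(J, I_n)$. Combining this with the inductive hypothesis applied to $J$ gives
\[
\reg\left(\sum_{i=1}^n I_i\right) = \reg(J + I_n) \leq \reg(J) + \reg(I_n) \leq \sum_{i=1}^{n-1}\reg(I_i) + \reg(I_n),
\]
which is the desired bound.

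There is no real obstacle here; the only point that requires a brief remark is that at every stage of the induction the partial sum remains a monomial ideal, so that Proposition \ref{thm: reg sum} may legitimately be invoked. Beyond that, the argument is purely formal.
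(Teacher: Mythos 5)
Your proof is correct and is exactly the argument the paper intends: the paper dispatches this corollary with the single line ``By induction the next statement follows,'' and your induction on the number of summands, with the observation that each partial sum remains a monomial ideal so that Proposition \ref{thm: reg sum} applies at every stage, is precisely the routine verification being elided.
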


\subsection{Combinatorics Background}
 
Our main ideals of interest are edge ideals of graphs.

\begin{definition}
Let $G=(V,E)$ be a finite, simple graph with vertex set $V=\{x_1,\ldots, x_n\}$ and edge set $E$. Meaning, we will not consider graphs with multiple edges between vertices or loops. Let $R=k[x_1,\ldots,x_n]$ be the polynomial ring over a field $k$. The \textit{edge ideal of $G$} in $R$ is defined as the ideal generated by monomials representing the edges of $G$, that is,
    \[
    I_G=\left(x_ix_j:\{x_i,x_j\} \in E\right)
    \]
\end{definition}

Castelnuovo-Mumford regularity of an edge ideal is related to the induced matching number of the graph. A matching is a set of edges where no two edges share a vertex. An \textit{induced matching }in a graph $G$ is a matching which forms an induced subgraph of $G$. We denote by $\text{indmatch}(G)$ the maximum number of edges in any induced matching. 
\begin{lemma}\cite[Lem.\,2.2]{Katzman}\label{lem:indmatch}
For any graph $G$, we have $\reg(R/I_G)\geq \text{indmatch}(G)$.
\end{lemma}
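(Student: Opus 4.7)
The plan is to exhibit a non-vanishing graded Betti number $\beta_{m,2m}(R/I_G) \neq 0$ where $m=\text{indmatch}(G)$, which immediately gives $\reg(R/I_G) \geq 2m-m = m$ directly from the definition of regularity.

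First I would fix a maximum induced matching $M = \{\{x_{i_1},y_{i_1}\},\dots,\{x_{i_m},y_{i_m}\}\}$ and let $W \subseteq V(G)$ be its vertex set, so that $|W| = 2m$. The key feature of $W$, coming from the word \emph{induced} in the definition of induced matching, is that the subgraph $G[W]$ of $G$ on $W$ consists of exactly the $m$ disjoint edges of $M$ and no other edges.

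Next I would invoke Hochster's formula for squarefree monomial (Stanley-Reisner) ideals, which reads
\[
\beta_{i,i+j}(R/I_G) = \sum_{\substack{W' \subseteq V(G) \\ |W'| = i+j}} \dim_k \tilde{H}_{j-1}(\Delta_{W'};k),
\]
where $\Delta_{W'}$ is the induced subcomplex on $W'$ of the independence complex of $G$; equivalently, $\Delta_{W'}$ is the independence complex of $G[W']$. The topological input I need is the following standard computation: the independence complex of a disjoint union of $m$ edges is the $m$-fold topological join of $S^0$ with itself, which is homeomorphic to the sphere $S^{m-1}$. Applied to $W' = W$, this gives $\tilde{H}_{m-1}(\Delta_W;k) \cong k$, and therefore specializing Hochster's formula at $i = j = m$ (so $i+j = 2m = |W|$) picks up this nonzero summand, yielding $\beta_{m,2m}(R/I_G) \geq 1$. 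This is exactly what is needed.

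The main obstacle, aside from citing Hochster's formula and the sphere identification as black boxes, is really conceptual: one wants to know that information about the small induced piece $M$ transfers to $G$. An alternative route that sidesteps Hochster is to pass to the quotient of $R/I_G$ by the linear forms $\{x_v : v \notin W\}$; since $M$ is induced, this kills precisely the generators of $I_G$ not supported on $W$, leaving the complete intersection $I_M = (x_{i_1}y_{i_1},\dots,x_{i_m}y_{i_m})$ whose Koszul resolution places a generator in bidegree $(m,2m)$, so $\reg(k[W]/I_M) = m$. The delicate part of this alternative is arguing that the quotient by the linear forms does not decrease regularity, which requires a careful check that these linear forms form a regular sequence modulo $I_G$ (true here because they correspond to vertices outside the support of a generating set). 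Either route concludes with $\reg(R/I_G) \geq m = \text{indmatch}(G)$.
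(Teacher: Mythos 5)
The paper does not prove this lemma at all --- it is quoted from Katzman with a citation --- so there is no in-paper argument to match; your job was to supply the missing proof, and your primary route does so correctly. The Hochster's formula computation is the standard (indeed essentially Katzman's) argument: the Stanley--Reisner complex of $I_G$ is the independence complex of $G$, its restriction to the vertex set $W$ of an induced matching of size $m$ is the independence complex of $m$ disjoint edges, which is the $m$-fold join of $S^0$ and hence $S^{m-1}$, giving $\tilde{H}_{m-1}(\Delta_W;k)\cong k$ and $\beta_{m,2m}(R/I_G)\neq 0$; your indexing of Hochster's formula is consistent and the conclusion $\reg(R/I_G)\geq m$ follows. (Your notation $\{x_{i_k},y_{i_k}\}$ tacitly assumes a bipartite labeling, but nothing in the argument uses it, so the proof covers arbitrary graphs as the lemma requires.) One genuine error worth flagging is in your ``alternative route'': the variables $x_v$ with $v\notin W$ are \emph{not} in general a regular sequence modulo $I_G$ --- any such $x_v$ with a neighbor $u$ is a zerodivisor, since $x_v\cdot x_u\in I_G$ while $x_u\notin I_G$ --- and they typically do appear in the support of generators of $I_G$. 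The correct justification for that route is the monotonicity of graded Betti numbers under passing to induced subgraphs, $\beta_{i,j}(R/I_{G[W]})\leq \beta_{i,j}(R/I_G)$, which is itself read off from Hochster's formula (the sum over subsets of $W$ is a partial sum of the sum over subsets of $V$), so the ``alternative'' is really the same argument in disguise. Since your main route is self-contained and correct, the lemma is proved.
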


In this paper we will be focusing on bipartite graphs and particular subgraphs thereof.

\begin{definition}\label{def:biclique}
A \textit{bipartite graph} is a graph whose vertices can be divided into disjoint  sets $X$ and  $Y$  so that every edge connects a vertex in $X$ to one in $Y$.  We denote a bipartite graph with vertex sets $X$ and $Y$ as $G=(X\sqcup Y,E)$.

\noindent A \textit{biclique} is a complete bipartite graph.

\noindent A \textit{biclique cover} of a bipartite graph $G$ is a collection of bicliques $\{C_i =(X_i\cup Y_i, E_i); i = 1,\ldots, t\}$ with each $X_i\subseteq X$, $Y_i\subseteq Y$ and $E_i\subseteq E$ such that $\bigcup_{i=1}^t E_i = E$.

\noindent The \textit{biclique cover number} of a graph $G$, denoted $\text{bc}(G)$, is the smallest number of bicliques in any biclique cover of $G$.  

\end{definition}

\begin{definition}
Let $G=(X\sqcup Y,E)$ be a bipartite graph with vertex sets $X=\{x_1,\ldots,x_m\}$ and $Y= \{y_1,\ldots,y_n\}$.  The \textit{biadjacency matrix} is the $m\times n$ matrix $A(G)$ in which $a_{i,j} = 1$ if $(x_i, y_j) \in E$ and $a_{ij}=0$ otherwise.
\end{definition}


\begin{example}
We illustrate two different ways to represent the same data, coming from a bipartite graph. 
\begin{figure}[h!]
\centering
\captionsetup[subfigure]{labelformat=empty}
\begin{subfigure}[b]{0.3\textwidth}
\centering
\begin{tikzpicture}
    \node[shape=circle,scale=0.5, draw=black, fill=black,label=left:$x_3$] (A) at (0,0) {};
    \node[shape=circle,scale=0.5,draw=black, fill=black,label=left:$x_2$] (B) at (0,.5) {};
    \node[shape=circle,scale=0.5,draw=black, fill=black,label=left:$x_1$] (C) at (0,1) {};
    \node[shape=circle,scale=0.5,draw=black, fill=black,label=right:$y_4$] (D) at (1,0) {};
    \node[shape=circle,scale=0.5,draw=black, fill=black,label=right:$y_3$] (E) at (1,.33) {};
    \node[shape=circle,scale=0.5,draw=black, fill=black,label=right:$y_2$] (F) at (1,.66) {};
    \node[shape=circle,scale=0.5,draw=black, fill=black,label=right:$y_1$] (G) at (1,1) {};
    \path (C) [black] edge node[left] {} (D);
    \path (C) [black] edge node[left] {} (E);
    \path (C) [black] edge node[left] {} (F);
    \path (C) [black] edge node[left] {} (G);
    \path (B) [black] edge node[left] {} (F);
    \path (B) [black] edge node[left] {} (G);
    \path (A) [black] edge node[left] {} (G);
\end{tikzpicture}
\subcaption[]{G}
\end{subfigure}
\begin{subfigure}[b]{0.3\textwidth}
\centering
$\begin{bmatrix}
1&1&1&1\\
1&1&0&0\\
1&0&0&0
\end{bmatrix}$
\subcaption[]{A(G)}
\end{subfigure}
\caption{A bipartite graph $G$ along with its biadjacency matrix $A(G)$. The edge ideal of $G$ is $I_G =\langle x_1y_1,x_1y_2,x_1y_3,x_1y_4,x_2y_1,x_2y_2,x_3y_1\rangle $ }
\end{figure}
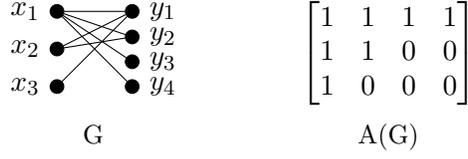

\end{example}

    
\newpage

\section{Boolean Rank via Biclique Covers}\label{s: biclique}

Any binary matrix, $A\in \Bc^{m\times n}$, can be viewed as the bi-adjacency matrix of a bipartite graph $G=(X\sqcup Y, E)$ where the rows of $A$ correspond to the vertices in $X$, the columns of $A$ correspond to the vertices in $Y$, and $a_{i,j}=1$ if and only if $\{i,j\}\in E$. Conversely every bipartie graph $G$ gives rise to a bi-adjacency matrix $A(G)$ and this establishes a bijective correspondence between bipartite graphs and binary matrices.

For a bipartite graph $G$, the \textit{biclique cover number} of $G$ is equal to the \textit{Boolean rank} of $A(G)$; see \cite{RecentDevs}. That is,
\begin{equation}\label{eq: bc=brank}
\text{bc}(G)=\brank(A(G)).
\end{equation}

\begin{example}\label{ex: running ex}
The factorization below
\[
A=\begin{bmatrix}
1&1&0&0\\
0&1&1&0\\
0&0&1&1\\
1&1&1&0\\
0&1&1&1\\
1&1&1&1
\end{bmatrix}= \begin{bmatrix}
\textcolor{teal}{1}&\textcolor{orange}{0}&\textcolor{purple}{0}\\
\textcolor{teal}{0}&\textcolor{orange}{1}&\textcolor{purple}{0}\\
\textcolor{teal}{0}&\textcolor{orange}{0}&\textcolor{purple}{1}\\
\textcolor{teal}{1}&\textcolor{orange}{1}&\textcolor{purple}{0}\\
\textcolor{teal}{0}&\textcolor{orange}{1}&\textcolor{purple}{1}\\
\textcolor{teal}{1}&\textcolor{orange}{1}&\textcolor{purple}{1}
\end{bmatrix} \wedge \begin{bmatrix}
\textcolor{teal}{1}&\textcolor{teal}{1}&\textcolor{teal}{0}&\textcolor{teal}{0}\\
\textcolor{orange}{0}&\textcolor{orange}{1}&\textcolor{orange}{1}&\textcolor{orange}{0}\\
\textcolor{purple}{0}&\textcolor{purple}{0}&\textcolor{purple}{1}&\textcolor{purple}{1}
\end{bmatrix}= V \wedge H.
\]
can be visualized using bipartite graphs. Taking $A$ as the bi-adjacency matrix of the graph $G$ with $V(G)=X\sqcup Y$, then each column of the first factor matrix, $V$, and  corresponding row in the second factor matrix, $H$, determine a biclique in the biclique cover of $G$. If the entry in the $i$th column of $V$ is a one, that indicates that the vertex of $X$ corresponding to that entry is a member of the $i$th biclique and if the entry in the $i$th row of $H$ is a one, that indicates that the vertex of $Y$ corresponding to that entry is a member of the $i$th biclique.
\begin{center}
    \includegraphics[scale=0.18]{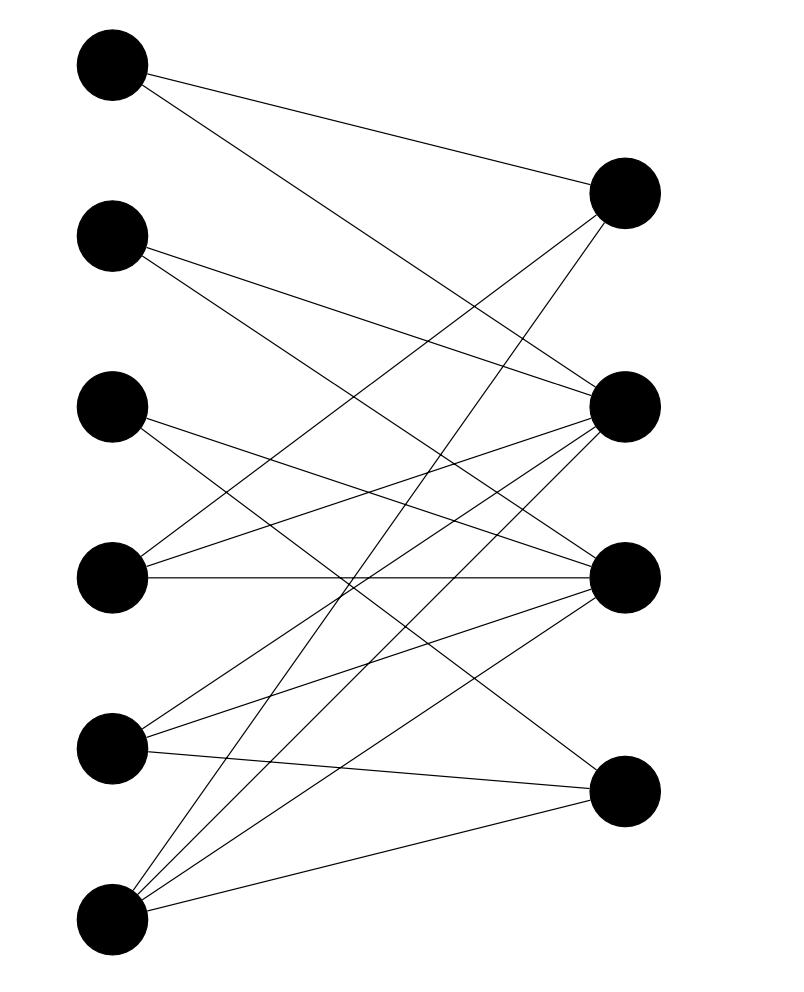} \raisebox{12ex}{=}
    \includegraphics[scale=0.25]{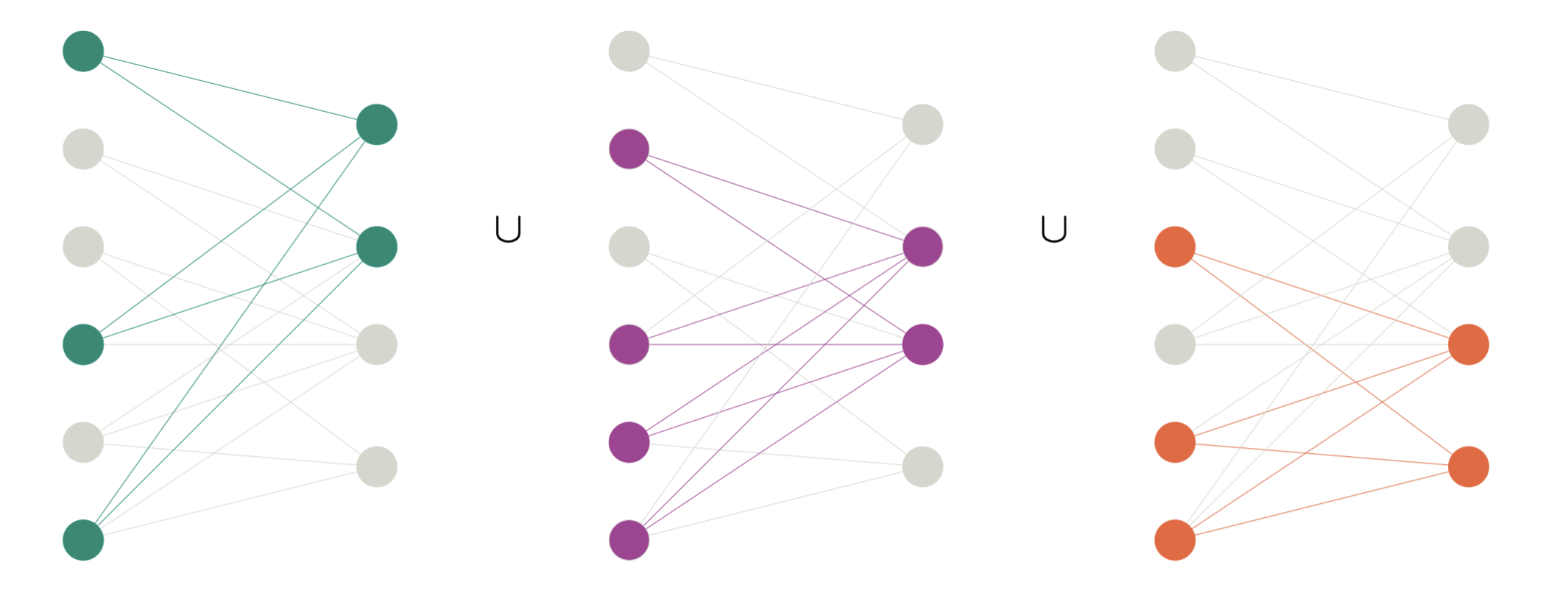}
\end{center}
The biclique cover pictured above shows that the Boolean rank of $A$ is at most three. It can be verified that there is no way to cover of $G$ by two bicliques, thus $\brank(A)=3$ by \eqref{eq: bc=brank}.
\end{example}

There has been much theory developed relating algebraic invariants of the edge ideal of a graph $G$ to combinatorial invariants of $G$ \cite{HaWoodroofe}. A natural question one might ask is:

\begin{question}
Do algebraic invariants of the edge ideal of bipartite graph $G$ give insight into the Boolean rank of $A(G)$?
\end{question}

We answer this question by demonstrating a lower bound on the Boolean rank of a matrix in terms of the Castelnuovo-Mumford regularity of a related edge ideal. This hinges on the observation that the regularity of the quotient ring of an edge ideal of a biclique is equal to one.


\begin{lemma}\cite[Cor.\,2.2]{NagelCorso}\label{lem: regularity biclique}
If $C$ is a biclique with edge ideal $I_C$, then
\[
\reg(R/I_C)=1.
\]
\end{lemma}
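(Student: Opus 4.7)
The plan is to establish the two inequalities $\reg(R/I_C) \geq 1$ and $\reg(R/I_C) \leq 1$ separately, using tools already introduced in the paper together with a standard short exact sequence argument.

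For the lower bound, I would appeal to Lemma \ref{lem:indmatch}. Write $C = K_{m,n}$ with parts $X = \{x_1,\dots,x_m\}$ and $Y = \{y_1,\dots,y_n\}$, and assume $C$ has at least one edge. Any two distinct edges $x_i y_j$ and $x_{i'} y_{j'}$ of $C$ span an induced subgraph that is either $K_{2,2}$ (when they share no vertex) or a path $P_3$ (when they share one), never two disjoint edges. Hence $\indmatch(C) = 1$, and Lemma \ref{lem:indmatch} yields $\reg(R/I_C) \geq 1$.

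For the upper bound, the key observation is the primary decomposition
\[
I_C \;=\; P \cap Q, \qquad P := (x_1,\dots,x_m), \quad Q := (y_1,\dots,y_n),
\]
which one checks on monomials: a monomial lies in both prime ideals iff it is divisible by some $x_i$ and some $y_j$, which puts it in the product $PQ = I_C$. The decomposition produces the short exact sequence
\[
0 \to R/I_C \to R/P \oplus R/Q \to R/(P+Q) \to 0,
\]
with the middle map the diagonal and the right map the signed difference. Since $P$, $Q$, and $P+Q$ are generated by subsets of the variables, the quotients $R/P$, $R/Q$, and $R/(P+Q)$ are polynomial rings in the remaining variables and therefore have regularity $0$ as graded $R$-modules. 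The standard inequality $\reg(A) \leq \max\{\reg(B),\, \reg(C)+1\}$ for a short exact sequence $0 \to A \to B \to C \to 0$ then gives $\reg(R/I_C) \leq \max\{0,\, 0+1\} = 1$.

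There is no serious obstacle here: both the induced-matching computation and the primary decomposition are transparent, and the short-exact-sequence bound is textbook. The only care required is to interpret ``biclique'' as a complete bipartite graph with at least one edge, so that $I_C \neq 0$ and the lower bound applies; in the degenerate edgeless case, $R/I_C = R$ has regularity $0$ and the lemma would need to be stated with this caveat.
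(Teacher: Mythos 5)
Your proof is correct, but it is worth noting that the paper does not actually prove this lemma: it is quoted as \cite[Cor.\,2.2]{NagelCorso}, where it appears as a special case of the fact that Ferrers ideals have linear minimal free resolutions (a biclique corresponds to a rectangular Ferrers tableau, and the resolution there is an Eagon--Northcott-type complex). Your argument is a self-contained and more elementary substitute. The lower bound via $\indmatch(C)=1$ and Lemma \ref{lem:indmatch} is fine (though even simpler: $I_C\neq 0$ is generated in degree $2$, so $\beta_{1,2}(R/I_C)\neq 0$ already forces $\reg(R/I_C)\geq 1$). The upper bound is the standard Mayer--Vietoris argument: the identification $I_C=P\cap Q=PQ$ with $P=(x_1,\dots,x_m)$, $Q=(y_1,\dots,y_n)$ is correct (checked on monomials since everything is monomial), the three outer terms $R/P$, $R/Q$, $R/(P+Q)$ are resolved by linear Koszul complexes and hence have regularity $0$, and the inequality $\reg(A)\leq\max\{\reg(B),\reg(C)+1\}$ for $0\to A\to B\to C\to 0$ is the correct one of the three standard regularity estimates. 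Your caveat about the edgeless case is appropriate but moot, since a biclique in this paper is a nonempty complete bipartite graph. What the citation buys that your argument does not is the stronger structural information (explicit linear resolution and Betti numbers of $I_C$, and the generalization to all Ferrers graphs used later in Theorem \ref{thm:FerrersIdealReg}); what your argument buys is independence from that machinery.
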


we use the previous lemma to bound Boolean rank. Related work appears in \cite{Woodroofe}. 

\begin{theorem}\label{thm: reg}
Let $G$ be a bipartite graph with adjacency matrix $A$. Then 
\[
\reg(R/I_G)\leq \brank(A).
\]
\end{theorem}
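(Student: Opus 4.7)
The plan is to reduce the statement to a combinatorial inequality via the Boolean rank/biclique cover identification \eqref{eq: bc=brank}, then combine the per-biclique regularity from Lemma \ref{lem: regularity biclique} into a global bound on $\reg(R/I_G)$.

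Setting $r = \brank(A)$, equation \eqref{eq: bc=brank} produces a biclique cover $\{C_1, \ldots, C_r\}$ of $G$ with $E(G) = \bigcup_{i=1}^r E(C_i)$, equivalently $I_G = \sum_{i=1}^r I_{C_i}$. Lemma \ref{lem: regularity biclique} pins each contribution at $\reg(R/I_{C_i}) = 1$.

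The remaining step is to show that the $r$ bicliques collectively push $\reg(R/I_G)$ up by at most $r$. A direct application of Corollary \ref{2.9} only yields $\reg(I_G) \leq 2r$, i.e.\ $\reg(R/I_G) \leq 2r - 1$, which is too weak. The clean route is to induct on $r$ via the ``one-biclique-at-a-time'' inequality $\reg(R/I_{G' \cup C}) \leq \reg(R/I_{G'}) + 1$ whenever $C$ is a biclique, iterated from the empty graph up to $G$.

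The main obstacle is proving this incremental inequality. The most promising tool is the Mayer--Vietoris-type short exact sequence
\[
0 \to R/(I_{G'} \cap I_C) \to R/I_{G'} \oplus R/I_C \to R/(I_{G'} + I_C) \to 0,
\]
which reduces the problem to controlling the intersection $I_{G'} \cap I_C$ via the long exact sequence in $\mathrm{Tor}$. Alternatively, one can invoke Woodroofe's theorem that $\reg(R/I_G)$ is bounded by the co-chordal cover number of $G$: since the complement of a biclique $K_{m,n}$ is the disjoint union $K_m \sqcup K_n$, which is chordal, any biclique cover of size $r$ is automatically a co-chordal cover of the same size, yielding the desired bound $\reg(R/I_G) \leq r$.
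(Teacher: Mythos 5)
Your setup --- pass to a biclique cover of size $r=\brank(A)$ via \eqref{eq: bc=brank}, write $I_G=\sum_{i=1}^r I_{C_i}$, and use $\reg(R/I_{C_i})=1$ from Lemma \ref{lem: regularity biclique} --- is exactly the paper's. You diverge at the last step. The paper closes the argument by applying the subadditivity of Proposition \ref{thm: reg sum} in its quotient-ring normalization, $\reg(R/(I+J))\leq \reg(R/I)+\reg(R/J)$, which is the form of the Kalai--Meshulam inequality that \cite{Herzog} proves for monomial ideals; in that normalization the cover gives $\reg(R/I_G)\le \sum_i\reg(R/I_{C_i})=r$ with no loss. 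Your objection that Corollary \ref{2.9} only yields $\reg(I_G)\le 2r$ is a fair reading of that corollary as literally printed (each $\reg(I_{C_i})=2$), but it dissolves once the cited result is used in the $R/I$ form, which is what the paper's displayed inequality does. Of your two workarounds, the Mayer--Vietoris induction is not a proof as written: the incremental inequality $\reg(R/I_{G'\cup C})\le \reg(R/I_{G'})+1$ is precisely the hard point, and you give no control on $\reg\bigl(R/(I_{G'}\cap I_C)\bigr)$, so that branch remains a sketch. The Woodroofe route, however, is complete and correct: a biclique $K_{m,n}$ is cochordal (its complement is $K_m\sqcup K_n$, possibly joined to a clique of isolated vertices, hence chordal), so a biclique cover of size $r$ is a cochordal cover of size $r$, and the theorem of \cite{Woodroofe} gives $\reg(R/I_G)\le \mathrm{cochord}(G)\le r=\brank(A)$. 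This is a genuinely different argument from the paper's and in fact proves something slightly stronger, since $\mathrm{cochord}(G)$ can be smaller than $\bc(G)$ (it already anticipates the Ferrers-cover refinement \eqref{eq: ferrers inequality} in the conclusion); the trade-off is that it imports a deeper theorem than the elementary monomial subadditivity the paper relies on.
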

\begin{proof}
Let $\brank(A)=r$. By \eqref{eq: bc=brank}, $\text{bc}(G)=\brank(A(G))$, so there exists a covering $G=\bigcup_{\ell=1}^r C_\ell$ of $G$ by bicliques $C_\ell$ that yields the following relationship between the associated edge ideals $I_G=\sum_{\ell=1}^r I_{C_\ell}$.

 By Proposition \ref{thm: reg sum} it follows that 
\begin{equation}\label{eq: 1 reg}
\reg(R/I_G) = \reg\left (\sum_{\ell=1}^r R/I_{C_\ell} \right ) \leq \sum_{\ell=1}^r \reg(R/I_{C_\ell}).
\end{equation}

Lemma \ref{lem: regularity biclique} gives $\reg(R/I_C)=1$ when $C$ is a biclique. Hence we have the equality 
\begin{equation}\label{eq: 2 reg}
\sum_{\ell=1}^r \reg(R/I_{C_\ell})= r= \text{bc}(G)=\brank(A). 
\end{equation}
Therefore, from \eqref{eq: 1 reg} and \eqref{eq: 2 reg} it follows that
\[
\reg(R/I_G)\leq \brank(A). \qedhere
\]
\end{proof}

Lemma \ref{lem:indmatch} along with the previously stated result give rise to a very useful chain of inequalities for a bipartite graph $G$ with biadjacency matrix $A$.

\begin{corollary}
Let $G$ be a bipartite graph with adjacency matrix $A$. Then 
    \begin{equation}\label{eq: inequalities}
    \indmatch(G) \leq \reg(R/I_G) \leq  \brank(A) = \bc(G).
\end{equation}
\end{corollary}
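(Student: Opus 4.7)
The plan is to prove this corollary by simple concatenation of three results already established in the excerpt, since each of the three comparisons in \eqref{eq: inequalities} is (essentially) one of them. No new technical work is required.

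First, I would invoke Lemma \ref{lem:indmatch} applied to the bipartite graph $G$ to obtain the leftmost inequality
\[
\indmatch(G) \leq \reg(R/I_G).
\]
Note that the lemma is stated for an arbitrary finite simple graph, and a bipartite graph is in particular such a graph, so no adaptation is needed.

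Next, I would apply Theorem \ref{thm: reg} (the main result of this section) directly to $G$ and its biadjacency matrix $A$, which gives the middle inequality
\[
\reg(R/I_G) \leq \brank(A).
\]
Finally, the rightmost equality $\brank(A) = \bc(G)$ is exactly the correspondence recorded in \eqref{eq: bc=brank}, which identifies the Boolean rank of the biadjacency matrix of a bipartite graph with the graph's biclique cover number. Chaining these three facts together yields the desired chain \eqref{eq: inequalities}.

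Since every ingredient is already available as a prior result in the paper, there is no real obstacle; the only thing to be mildly careful about is that $A$ in the corollary denotes the biadjacency matrix $A(G)$ used in \eqref{eq: bc=brank} and in Theorem \ref{thm: reg}, so that the three statements are being applied to the same pair $(G,A)$. I would make this compatibility explicit in one short sentence at the start of the proof and then display the three-step chain in a single line.
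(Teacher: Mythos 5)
Your proposal is correct and matches the paper's (implicit) argument exactly: the corollary is obtained by concatenating Lemma \ref{lem:indmatch}, Theorem \ref{thm: reg}, and the identity \eqref{eq: bc=brank}, which is precisely what the paper does when it introduces the chain of inequalities. Your remark about identifying $A$ with the biadjacency matrix $A(G)$ is a reasonable point of care but requires no further work.
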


\begin{example} \label{ex: running example reg}
Returning to Example \ref{ex: running ex} of 
\[
A=\begin{bmatrix}
1&1&0&0\\
0&1&1&0\\
0&0&1&1\\
1&1&1&0\\
0&1&1&1\\
1&1&1&1
\end{bmatrix},
\]
one computes $\reg(R/I_G)=2$, which is indeed less than the Boolean rank of $A$ which is 3, see Example \ref{ex: running ex}.
\end{example}

\begin{remark}
Note that $\reg(R/I_G)$ can be arbitrarily far from the Boolean rank. See Example \ref{ex: anti id example} for a case study.
\end{remark}

\section{Boolean Rank via Isolated Ones}\label{s: isolated ones}
\subsection{Isolated Ones}

We now consider a lower bound on the biclique cover number based on obstructions to bicliques. That is, if the graph pictured below is an induced subgraph of some graph bipartite graph $G$, then edges $\{i,j\}$ and $\{k,\ell\}$ must belong to different bicliques in a biclique cover of $G$ since there is no edge $\{i,\ell\}$. This motivates our definition of the isolation number and isolation complex of a matrix $A.$\\

\begin{center}
    \begin{tikzpicture}
    \node[shape=circle,scale=0.5, draw=black, fill=black,label=left:$k$] (A) at (0,0) {};
    \node[shape=circle,scale=0.5,draw=black, fill=black,label=left:$i$] (B) at (0,.5) {};
    \node[shape=circle,scale=0.5,draw=black, fill=black,label=right:$\ell$] (D) at (1,0) {};
    \node[shape=circle,scale=0.5,draw=black, fill=black,label=right:$j$] (F) at (1,.5) {};
    \path (A) [black] edge node[left] {} (D);
    \path (F) [black] edge node[left] {} (A);
    \path (F) [black] edge node[left] {} (B);
\end{tikzpicture}\end{center}

\begin{definition}\label{def: isolated}
Let $A\in \Bc^{m,n}$. A pair of entries $\{a_{ij},a_{k\ell}\}$ is called \textit{isolated} if  $a_{ij}=a_{k\ell}=1$ and $a_{i\ell}\wedge a_{kj}=0$.  A subset $T$ of the entries of $A$ which have value $1$ is an \textit{isolated set} if it is of size one or all pairs of elements in $S$ are isolated. We say that $a_{ij}$ is \textit{isolated from} $a_{k\ell}$ if $\{a_{ij},a_{k\ell}\}$ is an isolated pair.\\

\noindent The \textit{isolation number} of a Boolean matrix $A$, $\iota(A)$ is the maximum of the sizes of all isolated sets in $A$. 
\end{definition}

Note that isolated pairs cannot exist in the same row or column. For example, say $a_{ij}=a_{kj}=1$ are in the same column. Then $a_{ij}\wedge a_{kj}=1\neq 0$ so these entries cannot form an isolated pair. Similarly if $a_{ij}=a_{ik}=1$ are in the same row they cannot forma n isolated pair. 

\begin{example}
    If 
        \[
        A = \begin{bmatrix} 1&0&1\\0&1&0\\0&0&1\end{bmatrix}
        \]
    then $\{a_{11},a_{22}\},\{a_{22},a_{33}\},\{a_{11},a_{33}\},\{a_{13},a_{22}\}$ are isolated pairs. This means that $\iota(A)=3$ since $\{a_{11},a_{22},a_{33}\}$ is the largest set of pairwise isolated ones. Note that $a_{13}$ cannot be added to this set since $a_{13}$ is not isolated from $a_{11}$ nor $a_{33}$ by the preceding remarks.  
\end{example}

The isolation number provides a lower bound on the Boolean rank.

\begin{proposition}\cite[Prop.\,2]{DeSantis21}\label{prop: iso-brank}
    Let $A\in \Bc^{m,n}$. Then
    \[
    \iota(A) \leq \brank(A).
    \]
\end{proposition}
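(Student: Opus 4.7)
The plan is to prove this via a short pigeonhole argument using the biclique cover interpretation of Boolean rank already established in \eqref{eq: bc=brank}. Setting $r = \brank(A)$, I would identify $A$ with the biadjacency matrix of a bipartite graph $G$ and invoke the equality $\bc(G) = \brank(A(G)) = r$ to obtain a biclique cover $G = \bigcup_{s=1}^r C_s$, where each $C_s = (X_s \sqcup Y_s, E_s)$ is a complete bipartite subgraph. Each $1$-entry $a_{ij}$ of $A$ corresponds to an edge $\{i,j\} \in E$, and hence to at least one index $s \in \{1, \ldots, r\}$ with $\{i,j\} \in E_s$, equivalently $i \in X_s$ and $j \in Y_s$.

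Next, I would show that entries in an isolated set get assigned to distinct bicliques. Suppose $T$ is an isolated set and assign each entry $a_{ij} \in T$ to some biclique index $s(i,j)$ for which $\{i,j\} \in E_{s(i,j)}$. If two entries $a_{ij}, a_{k\ell} \in T$ were assigned the same index $s$, then $i, k \in X_s$ and $j, \ell \in Y_s$; since $C_s$ is a complete bipartite graph, the edges $\{i,\ell\}$ and $\{k,j\}$ would also lie in $E_s \subseteq E$, forcing $a_{i\ell} = a_{kj} = 1$ and hence $a_{i\ell} \wedge a_{kj} = 1$. This contradicts the defining isolation condition, so the assignment $s \colon T \to \{1, \ldots, r\}$ is injective, yielding $|T| \leq r$.

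Taking $T$ to be a maximum isolated set gives $\iota(A) \leq r = \brank(A)$, as desired. I do not expect any serious obstacle: the combinatorics of the biclique cover makes the argument essentially a two-line observation, and the only nuance is that an edge may lie in several bicliques of the cover, which is handled by fixing one such choice for each element of $T$ before comparing. An alternative route, should one want to bypass the biclique language, is to work directly with a Boolean factorization $A = V \wedge H$: whenever $a_{ij} = \bigvee_s V_{is} \wedge H_{sj}$ equals $1$, pick a witness index $s(i,j)$ with $V_{i,s(i,j)} = H_{s(i,j),j} = 1$, and repeat the argument, noting that coincident witnesses force $a_{i\ell} \geq V_{is} \wedge H_{s\ell} = 1$ and similarly $a_{kj} = 1$, again contradicting isolation.
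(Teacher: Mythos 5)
Your argument is correct and complete. Note that the paper does not prove this proposition at all --- it is quoted from \cite[Prop.\,2]{DeSantis21} --- so there is no in-paper proof to compare against; your pigeonhole argument via the biclique cover (two isolated entries cannot lie in a common biclique, since completeness would force $a_{i\ell}=a_{kj}=1$) is the standard proof and supplies exactly what the citation elides. The alternative phrasing via witness indices in a factorization $A=V\wedge H$ is the same argument in matrix language, and both handle the only subtlety correctly, namely fixing one covering biclique (or witness index) per entry before testing injectivity.
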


\begin{example}
Let $A$ be same matrix as in \Cref{ex: running example reg}. Highlighted below is one (of the many) isolated sets of size $3$. By inspection, we cannot find a set of pairwise isolated ones of size 4. Thus, $\iota(A)=3$.
\[
A=\begin{bmatrix}
\textcolor{magenta}{\bf 1}&1&0&0\\
0&1&1&0\\
0&0&1&1\\
1&1&\textcolor{magenta}{\bf 1}&0\\
0&1&1&\textcolor{magenta}{\bf 1}\\
1&1&1&1
\end{bmatrix}.
\]
\end{example}
\begin{remark}
Note that $\iota(A)$ can also be arbitrarily far from the Boolean rank. See Example \ref{ex: anti id example}.
\end{remark}

\subsection{Algebraic Computation of the Isolation Number}
Our goal is to translate the isolation number into an algebraic invariant of the matrix $A$. We observe that isolated sets are closed under subsets, and so the collection of all isolated sets forms a simplicial complex. The Stanley-Reisner correspondence defines a monomial ideal which can be obtained from a simplicial complex. We show in \Cref{thm: isolation number = reg} that the Castelnuovo-Mumford regularity of this Stanley-Reisner ring modulo the squares of the variables is the isolation number of the matrix. 

\begin{definition}\label{def: Delta}
Let $\Delta_A$ be the simplicial complex whose faces are the isolated sets of the matrix $A$. Since each entry of $A$ that is a 1 is an isolated set of size one, these will be the vertices of $\Delta_A$.
\end{definition}

\begin{remark}
 Note that   $\Delta_A$ is a flag complex, namely it satisfies the property that  for every subset $\sigma$ of at least two vertices of $\Delta_A$, if every pair of vertices in $\sigma$ is a face of the complex, then $\sigma$ itself is a face of  the complex too. This follows from \Cref{def: Delta} and \Cref{def: isolated}.  
\end{remark}

We proceed to construct a series of ideals associated to $\Delta_A$. To do so we replace the nonzero-entries of $A$ with distinct variables.

\begin{definition}\label{def: A}
Let $A\in\Bc^{m\times n}$. Define the matrix $A[x]$ as follows 
\[
A[x]_{i,j}=\begin{cases} x_{ij}& a_{ij}=1\\ 0 & a_{ij}=0 \end{cases}.
\]
\end{definition} 
\begin{example}
Let $A$ be as in Example 4.2. Then $\Delta_A$ with labeled vertices coming from $A[x]$ can be realized as 
\begin{center}
\tikzset{every picture/.style={line width=0.75pt}} 
\begin{tikzpicture}
    \node[circle, fill=black, inner sep=1.5pt, label=below:$x_{22}$] (v0) at (0,0) {};
    \node[circle, fill=black, inner sep=1.5pt, label=below:$x_{33}$] (v1) at (2,0) {};
    \node[circle, fill=black, inner sep=1.5pt, label=above:$x_{11}$] (v2) at (0,2) {};
    \node[circle, fill=black, inner sep=1.5pt, label=above:$x_{13}$] (v3) at (2,2) {};

    \fill[gray!20, opacity=1] (v0.center) -- (v1.center) -- (v2.center) -- cycle;

    \draw[thick] (v0) -- (v2);
    \draw[thick] (v0) -- (v1);
    \draw[thick] (v1) -- (v2);
    \draw[thick] (v1) -- (v3);
\end{tikzpicture}

\end{center}
\end{example}

\begin{definition}
Let $k[A]= k[x_{ij} \mid a_{ij}=1]$ be a ring with indeterminates corresponding to the nonzero entries of $A[x]$. Denote $I_{\Delta_A}\subset k[A]$  the {\em Stanley-Reisner ideal} of $\Delta_A$
\begin{equation}\label{eq: SR ideal}
I_{\Delta_A}=\left\{ \prod_{a_{ij}\in S} x_{ij} \mid S \not \in \Delta\right\}.
\end{equation}
This ideal is generated by monomials corresponding to non-faces of $\Delta_A$, that is the \textit{non-isolated} sets of $A$.
\end{definition}

We want to find an explicit list of generators of $I_{\Delta_A}$. It is well known that the generators of a the Stanley-Reisner ideal of a flag complex are quadratic. Ultimately, we will identify them as the $2\times 2$ minors of the matrix $A[x]$ introduced in Definition \ref{def: A} that are monomials. To do this, we must introduce a few new notions.

\begin{definition}
    Let $A$ be an $m\times n$ Boolean matrix. Let $I_t(A)\subseteq k[A]$ denote the \textit{$t$-th determinantal ideal} of $A[x]$; that is, the ideal generated by all $t\times t$ minors of $A[x]$.
\end{definition}


\begin{lemma}\label{lem: 2x2 minors}
    Let $A\in \Bc^{m,n}$. Entries $a_{ij}$ and $a_{k\ell}$ are an isolated pair, if and only if $x_{ij}x_{k\ell}\in I_2(A)$. 
\end{lemma}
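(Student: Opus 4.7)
The plan is to prove the biconditional by exhibiting an explicit $2 \times 2$ minor in one direction and using a multi-grading argument in the other.

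For the forward direction, suppose $\{a_{ij}, a_{k\ell}\}$ is an isolated pair. By the remark following \Cref{def: isolated}, $i \neq k$ and $j \neq \ell$, and by the definition of isolated pair $a_{i\ell} \wedge a_{kj} = 0$. The $2 \times 2$ minor of $A[x]$ on rows $\{i,k\}$ and columns $\{j,\ell\}$ equals $x_{ij} x_{k\ell} - A[x]_{i\ell} A[x]_{kj}$; since $a_{i\ell} \wedge a_{kj} = 0$ forces at least one of $A[x]_{i\ell}, A[x]_{kj}$ to be zero, this minor reduces to the monomial $x_{ij} x_{k\ell}$, which thus lies in $I_2(A)$.

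For the converse I would endow $k[A]$ with the $\mathbb{Z}^{m+n}$-multigrading in which $x_{ab}$ has multi-degree $e_a + e_{m+b}$. Every $2 \times 2$ minor on rows $\{r_1, r_2\}$ (with $r_1 \neq r_2$) and columns $\{c_1, c_2\}$ (with $c_1 \neq c_2$) is multi-homogeneous of multi-degree $e_{r_1}+e_{r_2}+e_{m+c_1}+e_{m+c_2}$, and different unordered row-column pairs produce different multi-degrees. Assuming $x_{ij}x_{k\ell} \in I_2(A)$, the monomial lives in multi-degree $e_i + e_k + e_{m+j} + e_{m+\ell}$, and for this to be attained by some $2 \times 2$ minor we must have $i \neq k$ and $j \neq \ell$. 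The multi-graded piece of $I_2(A)$ in this degree is precisely the $k$-span of the unique minor with that multi-degree, namely $x_{ij}x_{k\ell} - A[x]_{i\ell}A[x]_{kj}$. For the monomial $x_{ij}x_{k\ell}$ to lie in that span, the term $A[x]_{i\ell}A[x]_{kj}$ must vanish, forcing $a_{i\ell} \wedge a_{kj} = 0$. Together with $a_{ij} = a_{k\ell} = 1$, which is implicit in the fact that $x_{ij}$ and $x_{k\ell}$ are variables of $k[A]$, this yields that $\{a_{ij}, a_{k\ell}\}$ is an isolated pair.

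The main obstacle is verifying that each relevant multi-graded piece of $I_2(A)$ coincides with the $k$-span of a single minor rather than a larger subspace; this rests on the fact that distinct $2 \times 2$ minors of $A[x]$ have distinct multi-degrees, so only one generator of $I_2(A)$ can contribute to any given multi-graded component.
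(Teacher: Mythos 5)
Your proof is correct, and the forward direction is identical to the paper's: exhibit the $2\times 2$ minor on rows $\{i,k\}$ and columns $\{j,\ell\}$ and observe that the vanishing of $a_{i\ell}\wedge a_{kj}$ kills the cross term, leaving the monomial. For the converse, the paper argues by contradiction with an ad hoc term-matching observation: if the pair were not isolated, the relevant minor would contain the term $x_{kj}x_{i\ell}$, which occurs in no other $2\times 2$ minor and therefore cannot cancel in any $k$-linear combination of minors. Your $\mathbb{Z}^{m+n}$-multigrading is the structural formalization of exactly that disjoint-support observation, so the two arguments are the same in substance; what your packaging buys is a cleaner justification of the step the paper leaves implicit (namely, why only the single minor on rows $\{i,k\}$ and columns $\{j,\ell\}$ can contribute to the degree of $x_{ij}x_{k\ell}$, and why the degenerate cases $i=k$ or $j=\ell$ are automatically excluded). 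One small point to spell out: since the target multidegree has total degree $2$ and every generator of $I_2(A)$ has degree $2$, the multigraded piece of the \emph{ideal} in that degree consists only of scalar multiples of minors, not of higher-degree multiples of them; you gesture at this but it is the one place where the argument needs the degree count as well as the multigrading.
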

\begin{proof}
Suppose $a_{ij}$ and $a_{k\ell}$ are an isolated pair, 
 then we have $x_{ij}\neq 0, x_{k\ell}\neq 0$ and since $a_{ij}\wedge a_{jk}=0$ there are three options for the $2\times 2$ submatrix of $A[x]$ containing these entries
\[
\begin{bmatrix}
    x_{ij}& 0\\
    0 & x_{k\ell}
\end{bmatrix} \text{ or } 
\begin{bmatrix}
    x_{ij}& 0\\
    x_{kj} & x_{k\ell}
\end{bmatrix} \text{ or }
\begin{bmatrix}
    x_{ij}& x_{i\ell}\\
    0 & x_{k\ell}
\end{bmatrix}.
\]
In every case the determinant is $x_{ij}x_{k\ell}$. Hence, $x_{ij}x_{k\ell}\in I_2(A)$, as desired. 

On the other hand suppose $x_{ij}x_{k\ell}\in I_2(A[x])$. In particular this conveys that $x_{ij},x_{k\ell}\in k[A]$ so that $a_{ij}=a_{k\ell}=1$. 
Suppose towards a  contradiction that $\{a_{ij},a_{k\ell}\}$ is not an isolated pair, 
then the following is  a $2\times 2$ submatrix of $A[x]$\[
\begin{bmatrix}
    x_{ij} & x_{i\ell}\\
    x_{kj}& x_{k\ell}
\end{bmatrix}.
\]
Since $x_{ij}x_{k\ell}\in I_2(A[x])$, this monomial is a linear combination of determinants of $2\times 2$ submatrices of $A[x]$ including the determinant of the submatrix shown above.
However, the term $x_{kj}x_{i\ell}$ in the determinant of the above matrix appears in no other $2\times 2$ minor of $A[x]$ and so it cannot cancel in the linear combination. This yields a contradiction. Thus it must be that $\{a_{ij},a_{k\ell}\}$ is an isolated pair.
\end{proof}

\Cref{lem: 2x2 minors} gives a necessary and sufficient criterion for  an edge to appear in $\Delta_A$ in terms of the $2\times 2$ minors of $A[x]$. For higher dimensional faces of $\Delta$, we can give a sufficient condition for a set to be isolated, and hence for the corresponding  face to appear in $\Delta_A$. However, this condition is no longer necessary, that is, we can no longer say that an isolated set guarantees a monomial generator in the determinantal ideal. 

\begin{theorem}\label{thm: monomials=isolated}
        Let $A$ be an $m\times n$ Boolean matrix. If for some integer $s\geq 2$ the monomial $x_{i_1 j_1}\cdots x_{i_sj_s}\in I_s(A[x])$ then  $\{a_{i_k j_k}\mid 1\leq k\leq s\}$ is an isolated set.
\end{theorem}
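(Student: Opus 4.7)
The plan is to exploit the fine multigrading on $k[A]$ to pin down the structure of a monomial in $I_s(A[x])$, after which the isolated pair condition will fall out by specializing to transpositions in a single determinant. Concretely, I would equip $k[A]$ with the $\mathbb{Z}^m \oplus \mathbb{Z}^n$-multigrading defined by $\deg(x_{ij}) = (e_i, f_j)$. Under this grading, every $s\times s$ minor $\det(N)$ of $A[x]$ on rows $R$ and columns $C$ is bi-homogeneous of multidegree $\bigl(\sum_{i \in R} e_i,\, \sum_{j \in C} f_j\bigr)$, a vector with entries in $\{0,1\}$. Writing $m = \sum_\alpha r_\alpha \det(N_\alpha)$ and projecting both sides to the multidegree of $m$, every contributing minor $\det(N_\alpha)$ must have row multidegree componentwise bounded by that of $m$; since $N_\alpha$ has exactly $s$ distinct rows, this forces the indices $i_k$ to be pairwise distinct and the row set of $N_\alpha$ to equal $\{i_1,\ldots,i_s\}$. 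The analogous statement holds for columns, so every contributing minor is the single submatrix $B$ of $A[x]$ with rows $\{i_1,\ldots,i_s\}$ and columns $\{j_1,\ldots,j_s\}$, and the corresponding $r_\alpha$ has multidegree zero, hence lies in $k$. Collecting the contributions yields $m = c \cdot \det(B)$ for some scalar $c \in k$.

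Next I would expand $\det(B) = \sum_{\sigma \in S_s} \sgn(\sigma) \prod_{k=1}^s B_{k,\sigma(k)}$, where $B_{k,\ell} = x_{i_k, j_\ell}$ when $a_{i_k, j_\ell} = 1$ and $B_{k,\ell} = 0$ otherwise. The identity permutation contributes precisely $+m$, a nonzero monomial. Because the indices $i_k$ and $j_k$ are each pairwise distinct, the nonzero terms for distinct permutations are distinct monomials, so no cancellation is possible. Matching $m = c \det(B)$ thus forces $c = 1$ and every non-identity term of $\det(B)$ to vanish identically; equivalently, for each $\sigma \neq \mathrm{id}$ there is some $k$ with $a_{i_k, j_{\sigma(k)}} = 0$.

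To finish, for each pair $k \neq \ell$ I would specialize this vanishing to the transposition $\sigma$ swapping $k$ and $\ell$. The corresponding term equals $\pm\bigl(\prod_{r\neq k,\ell} x_{i_r, j_r}\bigr) B_{k,\ell} B_{\ell, k}$, and since the variables $x_{i_r, j_r}$ are nonzero, its vanishing forces $B_{k,\ell} = 0$ or $B_{\ell, k} = 0$, that is, $a_{i_k, j_\ell} \wedge a_{i_\ell, j_k} = 0$. Combined with $a_{i_k, j_k} = a_{i_\ell, j_\ell} = 1$, this is precisely the condition that $\{a_{i_k, j_k}, a_{i_\ell, j_\ell}\}$ is an isolated pair, also captured by Lemma \ref{lem: 2x2 minors}. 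Since this holds for every pair, $\{a_{i_k, j_k} \mid 1 \leq k \leq s\}$ is an isolated set. The main technical obstacle is the multigrading analysis used to collapse the a priori complicated expression $m = \sum_\alpha r_\alpha \det(N_\alpha)$ down to the single scalar multiple $c \cdot \det(B)$; once that reduction is in hand, the rest is a direct reading of the determinantal expansion.
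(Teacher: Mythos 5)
Your proof is correct, but it follows a genuinely different route from the paper's. The paper argues by induction on $s$: after reducing (via disjointness of the monomial supports of distinct $s\times s$ minors) to the case where the monomial equals a single minor $\det_s$, it shows that each $(s-1)\times(s-1)$ subminor obtained by deleting one index is again a monomial in $I_{s-1}(A[x])$, applies the inductive hypothesis to get that each deleted set $T_\ell$ is isolated, and concludes since every pair lies in some $T_\ell$. You instead give a direct, induction-free argument: the multigrading by $\mathbb{Z}^m\oplus\mathbb{Z}^n$ pins the element down to a scalar multiple of the single minor $\det(B)$ (and, as a bonus, \emph{proves} that the row indices $i_k$ and column indices $j_k$ are each pairwise distinct, something the paper assumes implicitly), and then you read off pairwise isolation by evaluating the vanishing of the transposition terms $\sigma=(k\,\ell)$, which is exactly the $2\times 2$ condition $a_{i_kj_\ell}\wedge a_{i_\ell j_k}=0$. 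Your multigrading step is also slightly more careful than the paper's reduction, which speaks of a ``$k$-linear combination'' of minors where a priori the coefficients are arbitrary polynomials; restricting to the correct (multi)degree component is what legitimately reduces to scalars, and you make that explicit. The trade-off is that the paper's induction reuses Lemma \ref{lem: 2x2 minors} as a black box at the base case, whereas your argument in effect re-derives the relevant direction of that lemma inside the transposition analysis; both are sound, and yours is arguably shorter and more self-contained.
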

\begin{proof}
Proceed by induction on the degree of the monomial generator. 

\noindent \underline{Base Case:} the case $s=2$ is covered in \Cref{lem: 2x2 minors}.

\noindent \underline{Induction Hypothesis:} Assume that if $x_{i_1 j_1}\cdots x_{i_{s-1}j_{s-1}}\in I_{s-1}(A[x])$, then $\{a_{i_1 j_1}\cdots a_{i_{s-1}j_{s-1}}\}$ is an isolated set. Additionally assume that $s\geq 3$.

Suppose we know that $x_{i_1 j_1}\cdots x_{i_sj_s}\in I_s(A[x])$. This implies that the given monomial is a $k$-linear combination of $s\times s$ minors of $A[x]$. Since the sets of monomials that appear in the expressions of any two $s\times s$ minors of $A[x]$ are disjoint, it follows that there are no possible cancellations in any linear combination, including within a single determinant. Hence $x_{i_1 j_1}\cdots x_{i_sj_s}\in I_s(A[x])$ is in fact equal to the minor of $A[x]$ in rows $\{i_1, \ldots, i_s\}$ and columns $\{j_1, \ldots, j_s\}$. Denote this minor by $\text{det}_s$. 
We have established
\[
\text{det}_s = x_{i_1 j_1}\cdots x_{i_sj_s}.
\]
Further denote the minor of $A[x]$ in rows $\{i_1, \ldots, i_{s-1}\}$ and columns $\{j_1, \ldots, j_{s-1}\}$ by $\text{det}_{s-1}$.
In particular, this requires that each $x_{i_k,j_k} \in k[A]$ and hence $a_{i_k,j_k}=1$.

In the following considerations we write $x_{i,j}$ for the entry of $A[x]$ in row $i$ and column $j$ regardless of whether this entry is zero or a variable in $R[A]$. By the definition of determinant, we have 
\begin{align}
\text{det}_s  = \sum_{\sigma \in S_s}\sgn(\sigma)x_{i_1 j_{\sigma(1)}}\cdots x_{i_sj_{\sigma(s)}} \label{eq: 1}\\\
\text{det}_{s-1}= \sum_{\sigma \in S_{s-1}}\sgn(\sigma)x_{i_1 j_{\sigma(1)}}\cdots x_{i_{s-1}j_{\sigma(s-1)}},  \label{eq:2}
\end{align}
where $\sgn(\sigma)$ denotes the signature of $\sigma.$
Multiplying \eqref{eq:2} by $x_{i_s j_s}$, we get the following expression
\[
\sum_{\sigma \in S_{s-1}}\sgn(\sigma)x_{i_1 j_{\sigma(1)}}\cdots x_{i_{s-1}j_{\sigma(s-1)}}\cdot x_{i_s j_s},
\]
which is a part of \eqref{eq: 1}. But we assumed that \eqref{eq: 1} is equal to $x_{i_1 j_1}\cdots x_{i_sj_s}$, so we must have that every term that is \textbf{not} $x_{i_1 j_1}\cdots x_{i_sj_s}$ is zero. In particular, $x_{i_1 j_{\sigma(1)}}\cdots x_{i_sj_{\sigma(s-1)}}\cdot x_{i_s j_s}=0$, unless $\sigma$ is the identity permutation. Since $k[x_{11},\ldots, x_{ss}]$ is a domain and $x_{i_s,j_s}\neq 0$ we deduce that $x_{i_1 j_{\sigma(1)}}\cdots x_{i_{s-1}j_{\sigma(s-1)}}=0$, for all $\sigma$ that are not the identity permutation. Therefore  $\text{det}_{s-1}=x_{i_1 j_1}\cdots x_{i_{s-1}j_{s-1}}\in I_{s-1}(A[x])$.
The induction hypothesis now yields the following isolated set 
$T_{s}=\{a_{i_1,j_1},\ldots, a_{i_{s-1},j_{s-1}}\}$.

The same argument, with for any $1\leq \ell \leq s$, yields that the set
\[
T_\ell=\{a_{i_1 j_1}\cdots \widehat{a_{i_\ell j_\ell}} \cdots a_{i_{s}j_{s}}\}
\]
is an isolated set. Since any pair $\{a_{i_p j_p},a_{i_qj_q}\}$  with $1\leq p \leq s$, $1\leq q\leq s$ and $q\neq p$, is contained in at least one of the sets  $T_\ell$, $\{a_{i_p j_p},a_{i_qj_q}\}$  is an isolated pair for all $q$ and $p$. Therefore we have that $\{a_{i_1 j_1}\cdots a_{i_{s}j_{s}}\}$ is an isolated set, as desired.
\end{proof}

The converse of Theorem \ref{thm: monomials=isolated} is false. 
\begin{example}
Consider the matrix 
\[
A = \begin{bmatrix}
    0&1&1&1\\
    1&0&1&1\\
    1&1&0&1\\
    1&1&1&0\\
\end{bmatrix}.
\]
Here $\{a_{12},a_{23},a_{31}\}$ is an isolated set but the corresponding determinant is $x_{12}x_{23}x_{31}+ x_{12}x_{21}x_{33}$ and so $x_{12}x_{23}x_{31}\not\in I_3(A[x])$.
\end{example}

\Cref{lem: 2x2 minors}  allows us to find the generators of the Stanley-Reisner ideal of $\Delta_A$. 
\begin{corollary}
 Utilizing the notation in \eqref{eq: SR ideal}, we have
\[
I_{\Delta_A}= \big (x_{ij}x_{k\ell}:x_{ij}x_{k\ell}\not \in  I_2(A(x))\big ).
\]
\end{corollary}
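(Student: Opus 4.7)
The plan is to leverage two ingredients that are already in place: the observation that $\Delta_A$ is a flag complex, and the characterization of isolated pairs via $2\times 2$ minors provided by Lemma \ref{lem: 2x2 minors}.

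First I would recall the general fact that the Stanley-Reisner ideal of a flag simplicial complex is generated in degree two: its minimal generators are precisely the squarefree quadratic monomials $x_u x_v$ indexed by pairs $\{u,v\}$ of vertices that do \emph{not} form an edge of the complex. Since the remark following Definition \ref{def: Delta} establishes that $\Delta_A$ is flag, we may write
\[
I_{\Delta_A} = \bigl( x_{ij}x_{k\ell} : a_{ij}=a_{k\ell}=1 \text{ and } \{a_{ij},a_{k\ell}\} \text{ is not an isolated pair} \bigr).
\]

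Next I would translate the condition ``$\{a_{ij},a_{k\ell}\}$ is not an isolated pair'' into the stated algebraic condition. Lemma \ref{lem: 2x2 minors} asserts that $\{a_{ij},a_{k\ell}\}$ is isolated if and only if $x_{ij}x_{k\ell}\in I_2(A[x])$. Taking the contrapositive, the non-edges of $\Delta_A$ among variables of $k[A]$ correspond exactly to the pairs with $x_{ij}x_{k\ell}\notin I_2(A[x])$. Substituting this equivalence into the generating set above yields the claimed description of $I_{\Delta_A}$.

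The only point that deserves a brief sanity check is the case of two nonzero entries in the same row or column of $A$. Such a pair $\{a_{ij},a_{k\ell}\}$ is never isolated by the remark after Definition \ref{def: isolated}, so $x_{ij}x_{k\ell}$ must be a generator of $I_{\Delta_A}$; correspondingly, no $2\times 2$ submatrix of $A[x]$ can place both $x_{ij}$ and $x_{k\ell}$ on its main diagonal, so $x_{ij}x_{k\ell}\notin I_2(A[x])$. Thus the two descriptions agree on these pairs as well, and there is no obstacle to the argument: the entire content of the corollary is bookkeeping built on top of Lemma \ref{lem: 2x2 minors} and the flag property of $\Delta_A$.
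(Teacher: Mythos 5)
Your proposal is correct and follows exactly the route the paper intends: the paper states this corollary without a written proof, as an immediate consequence of the flag property of $\Delta_A$ (which forces the Stanley--Reisner ideal to be generated by the quadratic monomials indexed by non-edges) combined with Lemma \ref{lem: 2x2 minors} identifying edges with monomial $2\times 2$ minors. Your additional sanity check for entries sharing a row or column is consistent with both descriptions and does not change the argument.
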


Based on this, we introduce a new ideal, which we term the isolation ideal of a Boolean matrix.

\begin{definition}
The \textit{isolation ideal}, $J_A$, of an $m\times n$ Boolean matrix $A$ is the ideal of $k[A]$ defined by 
\begin{equation}\label{eq: J}
J_A = I_{\Delta_A} + \left(x_{ij}^2: 1\leq i\leq m,1\leq j\leq n, a_{ij}=1 \right).
\end{equation}
\end{definition}

We now arrive at the desired algebraic characterization of isolation number as the Castelnuovo-Mumford regularity of the quotient ring defined by the isolation ideal. This invariant also recovers the dimension of the simplicial complex $\Delta_A$.

\begin{theorem}\label{thm: isolation number = reg}
Let $A$ be a Boolean matrix and let $S=k[A]/J_A$ be the quotient ring of the isolation ideal. Then 
\[
\reg(S) =  \iota(A) = \dim(\Delta_A) +1 \leq \brank(A).
\]
\end{theorem}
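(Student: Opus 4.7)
The plan is to exploit the fact that $S = k[A]/J_A$ is an Artinian graded algebra whose Hilbert function directly records the face sizes of $\Delta_A$, so its regularity reduces to a combinatorial count and then to the bound supplied by \Cref{prop: iso-brank}.

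First I would identify a $k$-basis for $S$. Since $\Delta_A$ is a flag complex by the remark following \Cref{def: Delta}, its Stanley--Reisner ideal $I_{\Delta_A}$ is generated in degree two by monomials $x_{ij}x_{k\ell}$ indexed by non-edges of $\Delta_A$. Adding to $I_{\Delta_A}$ the squares $x_{ij}^2$ for every $(i,j)$ with $a_{ij}=1$, the ideal $J_A$ becomes generated by all quadratic monomials whose support is \emph{not} an edge of $\Delta_A$, now including the "diagonal" cases. Consequently, a monomial of $k[A]$ has nonzero image in $S$ if and only if it is squarefree and its support is a face of $\Delta_A$, i.e.\ an isolated set. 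Hence $S$ has a $k$-basis indexed by the faces of $\Delta_A$, is in particular Artinian, and
\[
\max\{d : S_d \neq 0\} \;=\; \max\{|F| : F \in \Delta_A\} \;=\; \dim(\Delta_A)+1 \;=\; \iota(A),
\]
the last two equalities being immediate from the definitions of $\Delta_A$ and $\iota(A)$.

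Next I would invoke the standard fact that for any Artinian standard graded $k$-algebra $T$ one has $\reg(T) = \max\{d : T_d \neq 0\}$. A quick justification is via local cohomology: for Artinian $T$ we have $H^0_\mathfrak{m}(T)=T$ and $H^i_\mathfrak{m}(T)=0$ for $i>0$, so the local-cohomology expression for regularity collapses to the socle degree. Applying this to $T = S$ gives $\reg(S)=\iota(A)$. The inequality $\iota(A)\le \brank(A)$ is then \Cref{prop: iso-brank}, producing the full chain
\[
\reg(S) \;=\; \iota(A) \;=\; \dim(\Delta_A)+1 \;\leq\; \brank(A).
\]

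The crux of the argument is the monomial-basis description in the first step. It is essential that $\Delta_A$ be a flag complex, since otherwise $I_{\Delta_A}$ would admit generators of degree larger than two and one could not conclude that every element of $S_d$ is represented by a squarefree monomial supported on a face of $\Delta_A$. Everything else in the proof is formal once that basis is in hand.
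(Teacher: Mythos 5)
Your proof is correct and follows essentially the same route as the paper: you identify the nonzero monomials of $S$ with the faces of $\Delta_A$ (this is the content of the paper's Lemma~\ref{lem: top socle=top isolated}), apply the Artinian fact $\reg(S)=\max\{d \mid S_d\neq 0\}$ (the paper cites \cite{Peeva} where you invoke local cohomology), and conclude with Proposition~\ref{prop: iso-brank}. One small correction to your closing remark: flagness of $\Delta_A$ is not actually essential for the basis description --- for \emph{any} simplicial complex $\Delta$, the quotient of the Stanley--Reisner ring by the squares of the variables has $k$-basis given by the squarefree monomials supported on faces, so that step would go through even if $I_{\Delta_A}$ had generators of degree larger than two.
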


To prove this characterization, we need the following results

\begin{lemma}[{\cite[Theorem 18.4]{Peeva}}]\label{lem: reg=top degree}
If $S$ is an Artinian quotient of a polynomial ring (that is, $\dim_k(S)$ is finite) then, $\reg(S)=\max\{i\mid S_i\neq 0\}$.
\end{lemma}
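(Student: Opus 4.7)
The plan is to verify the three-part chain separately, exploiting the fact that $J_A$ is designed so that $S = k[A]/J_A$ is an Artinian squarefree quotient whose Hilbert function records the face sizes of $\Delta_A$.

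First I would observe that since $J_A$ contains $x_{ij}^2$ for every variable $x_{ij}$ of $k[A]$, the ring $S$ is Artinian, with $\dim_k(S) < \infty$. This puts us in the hypothesis of the given Lemma \ref{lem: reg=top degree}, so $\reg(S) = \max\{i \mid S_i \neq 0\}$. The task therefore reduces to computing the top nonzero graded piece of $S$.

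Next I would identify a natural $k$-basis of $S$. Because $J_A = I_{\Delta_A} + (x_{ij}^2)$, every monomial that survives in $S$ must be squarefree (killed by the generators $x_{ij}^2$) and must correspond to a face of $\Delta_A$ (killed by the Stanley--Reisner generators otherwise). Conversely, the standard Stanley--Reisner argument shows that the squarefree monomials $\prod_{a_{ij} \in F} x_{ij}$ indexed by faces $F \in \Delta_A$ are $k$-linearly independent modulo $J_A$. Hence these monomials form a $k$-basis of $S$, and a basis monomial corresponding to a face $F$ sits in degree $|F|$. Consequently $\max\{i \mid S_i \neq 0\} = \max\{|F| \mid F \in \Delta_A\}$, which by definition is $\dim(\Delta_A) + 1$, and by the definition of $\Delta_A$ is exactly $\iota(A)$. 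This establishes the two equalities $\reg(S) = \iota(A) = \dim(\Delta_A) + 1$.

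Finally, the inequality $\iota(A) \leq \brank(A)$ is already recorded as Proposition \ref{prop: iso-brank}, so the chain is complete. I do not expect a real obstacle here: the only mild subtlety is justifying that the face-indexed squarefree monomials are linearly independent modulo $J_A$, which is the standard Stanley--Reisner basis argument and follows because $J_A$ is a monomial ideal whose standard monomials are precisely the squarefree monomials supported on faces of $\Delta_A$.
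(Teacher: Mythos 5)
Your proposal does not prove the statement it was assigned. The statement in question is the general commutative-algebra fact that for any Artinian graded quotient $S$ of a polynomial ring, $\reg(S)=\max\{i\mid S_i\neq 0\}$; the paper does not prove this itself but cites it as \cite[Theorem 18.4]{Peeva}. What you have written is instead a proof of Theorem \ref{thm: isolation number = reg} (the chain $\reg(S)=\iota(A)=\dim(\Delta_A)+1\leq\brank(A)$ for the specific ring $S=k[A]/J_A$), and in the course of it you explicitly invoke the target lemma as a given ingredient (``This puts us in the hypothesis of the given Lemma \ref{lem: reg=top degree}, so $\reg(S)=\max\{i\mid S_i\neq 0\}$''). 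Relative to the assigned statement, this is circular: you assume exactly what you were asked to establish, and the content you do supply (the Stanley--Reisner basis argument identifying the top degree of $S$ with $\iota(A)$) is the content of Lemma \ref{lem: top socle=top isolated} and the proof of Theorem \ref{thm: isolation number = reg}, not of Lemma \ref{lem: reg=top degree}.

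An actual proof of the lemma would have to argue from the definition of regularity in terms of graded Betti numbers (or, more efficiently, from the local cohomology characterization): since $S$ is Artinian, $H^i_{\mathfrak m}(S)=0$ for $i>0$ and $H^0_{\mathfrak m}(S)=S$, so $\reg(S)=\max\{j\mid H^0_{\mathfrak m}(S)_j\neq 0\}=\max\{j\mid S_j\neq 0\}$; equivalently one can track the socle degrees appearing in the last step of a minimal free resolution. None of this appears in your write-up. To be fair, your argument for the downstream theorem is essentially correct and matches the paper's Lemma \ref{lem: top socle=top isolated} plus Proposition \ref{prop: iso-brank}, but it answers a different question.
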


With $S$ as in the above lemma, note that $S=\bigoplus_{i\in \mathbb{N}} S_i$ is a graded module where $S_i$ denotes the $k$-vector space spanned by all monomials of degree $i$. The number $\max\{i\mid S_i\neq 0\}$ is called the \textit{top degree} of $S$. 

\begin{lemma}\label{lem: top socle=top isolated}
Let $A$ be a Boolean matrix with isolation ideal $J_A$. A square-free monomial $m=x_{i_1j_1}\cdots x_{i_sj_s}$ is a nonzero element of $S=k[A]/J_A$  if and only if $\{a_{i_1j_1},\ldots, a_{i_sj_s}\}$ is an isolated set in $A$. Moreover $m$ is of top degree in $S$ if and only if the corresponding isolated set has cardinality $s=\iota(A)$. 
\end{lemma}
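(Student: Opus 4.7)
The plan is to exploit the fact that $J_A$ is a monomial ideal containing $x_{ij}^2$ for every variable of $k[A]$, so that $S=k[A]/J_A$ is spanned as a $k$-vector space by the images of square-free monomials. From there, the characterization of nonzero elements and the top-degree claim will follow by tracking what it means, at the monomial level, to lie in $J_A$ versus in $I_{\Delta_A}$.

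First I would observe that if a monomial $\mu\in k[A]$ is not square-free, then $\mu$ is divisible by some $x_{ij}^2$ and hence lies in $J_A$ by \eqref{eq: J}, so its image in $S$ is zero. Consequently $S$ is spanned (as a $k$-vector space) by images of square-free monomials. Next, I would invoke the standard fact that a monomial belongs to a monomial ideal if and only if it is divisible by one of the monomial generators. Writing out the generators of $J_A$ from \eqref{eq: J}, they are either squares $x_{ij}^2$ or square-free monomials generating $I_{\Delta_A}$. For a square-free monomial $m = x_{i_1j_1}\cdots x_{i_sj_s}$ with support $T=\{a_{i_1j_1},\dots,a_{i_sj_s}\}$, divisibility by some $x_{ij}^2$ is impossible, so $m\in J_A$ if and only if $m\in I_{\Delta_A}$.

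The next step is to translate the condition $m\in I_{\Delta_A}$ into combinatorics. By the Stanley-Reisner correspondence \eqref{eq: SR ideal}, $m\in I_{\Delta_A}$ if and only if the support $T$ contains (equivalently, equals a superset of) some non-face of $\Delta_A$. Since the collection of faces of a simplicial complex is closed under taking subsets, the collection of non-faces is closed under taking supersets; hence $T$ contains a non-face exactly when $T$ is itself a non-face. Combining with Definition \ref{def: Delta}, this proves that $m$ is nonzero in $S$ if and only if $T$ is an isolated set of $A$, which is the first half of the lemma.

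Finally, for the top-degree statement, I would note that the graded piece $S_d$ has as a spanning set the images of square-free monomials of degree $d$ whose supports are faces of $\Delta_A$; these are linearly independent because distinct square-free monomials are distinct modulo the monomial ideal $J_A$ when they are nonzero. Thus $S_d\neq 0$ exactly when $\Delta_A$ has a face of cardinality $d$, and the largest such $d$ is precisely the isolation number $\iota(A)$. Therefore the square-free monomial $m$ has top degree in $S$ if and only if its support has size $s=\iota(A)$, as claimed. No step here looks genuinely hard; the only subtlety is the standard but essential point that membership in a monomial ideal is detected by divisibility by a generator, which separates the "squares part" of $J_A$ from the Stanley-Reisner part when $m$ is square-free.
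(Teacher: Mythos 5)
Your proof is correct and follows essentially the same route as the paper: reduce to square-free monomials via the $x_{ij}^2$ generators, observe that for a square-free monomial membership in $J_A$ is equivalent to membership in $I_{\Delta_A}$, translate that through the Stanley--Reisner correspondence into the support being a non-face, and then read off the top degree from the maximal face size. You simply spell out the divisibility criterion and the closure of non-faces under supersets, which the paper leaves implicit.
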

\begin{proof}
We have that  $\{a_{i_1j_1},\ldots, a_{i_sj_s}\}$ is an isolated set in $A$ if and only if  $m=x_{i_1j_1}\cdots x_{i_sj_s}\not \in I_{\Delta_A}$ by \eqref{eq: SR ideal}. For a square-free monomial $m$, $m\not \in I_{\Delta_A}$ is equivalent to $m\not \in J_A$, which is equivalent to $m\neq 0$ in $S$. Thus we have proven that the nonzero square-free monomials in $S$ correspond  to isolated sets of $A$. Via this correspondence the degree of the monomial $m$ is equal to the cardinality of the isolated set it represents. Since the top degree of $S$ is also the largest degree of a nonzero monomial in $S$ and nonzero monomials in $S$ are square-free, it follows from this correspondence that the nonzero square-free monomials of $S$ of top degree correspond to the isolated sets of largest cardinality and that this largest cardinality is equal to the top degree of $S$. The equality $s=\iota(A)$ now follows from the Definition \ref{def: isolated} of $\iota(A)$.



\end{proof}

We now turn to the proof of Theorem \ref{thm: isolation number = reg}, which relates $\iota(A)$ to the regularity of the quotient ring by the isolation ideal.

\begin{proof}[Proof of Theorem \ref{thm: isolation number = reg}]
 
From \Cref{lem: reg=top degree} we have that
\[\reg(S)=\max\{i\mid S_i\neq 0\}\]
and from \Cref{lem: top socle=top isolated} it follows that 
\[
\iota(A)=\max\{i\mid S_i\neq 0\}.
\]
Moreover $\iota(A)$ is by definition the largest cardinality of a face of $\Delta_A$, while $\dim(\Delta_A)$ is by Definition \ref{def: Delta} one less than the latter number.
From these considerations the conclusion $\reg(S)=\iota(A)=\dim(\Delta_A)+1$ follows. The remaining inequality is derived from Proposition \ref{prop: iso-brank}.
\end{proof}
\section{Examples and Computations}\label{s: computations}

In Sections 3 and 4, we showed two ways to estimate Boolean rank and for each we found an algebraic counterpart based on Castlenuovo-Mumford regularity. In this section, we provide examples showing the limitations and sharpness of these estimates.
\subsection{Block Matrices}
\begin{definition}
A \textit{block diagonal matrix} is a matrix of the form 
\[
\begin{bmatrix}
    B_1 & & 0\\
    & \ddots &\\
    0& & B_r
\end{bmatrix}
\]
where $B_1, \ldots, B_r$ are matrices lying along the diagonal and all the other entries of the matrix equal 0.

We define a \textit{solid block diagonal matrix} to be a block diagonal matrix where each $B_i$ is a matrix of all 1's.
\end{definition}
\tikzset{every path/.append style = { draw, dashed, rounded corners } }

\begin{example}
(A) is a block diagonal matrix, but not a solid block diagonal matrix. (B) is an example of solid block diagonal matrix. 
\begin{center}
\begin{tabular}{ c c c }

$\begin{bNiceMatrix}[margin=2pt] 
\Block[tikz={red}]{2-2}{}1 & 0 & 0 & 0\\
1 & 1 & 0 & 0\\
0 & 0 & \Block[tikz={red}]{2-2}{}1 & 0 \\
0 & 0 & 0 & 1 
\end{bNiceMatrix}$ &$\begin{bNiceMatrix}[margin=2pt] 
\Block[tikz={red}]{2-3}{}1 & 1 & 1 & 0 & 0 & 0\\
1 & 1 & 1 & 0 & 0 & 0\\
0 & 0 & 0 &\Block[tikz={red}]{2-3}{} 1 & 1 & 1 \\
0 & 0 & 0 & 1 & 1 & 1
\end{bNiceMatrix}$\\ 
 (A) & (B)  
\end{tabular}
\end{center}

\end{example}

Block matrices are the most favorable of the types of matrices to compute Boolean matrix rank since the data 
is already neatly organized into groups. Theorem \ref{thm: block diagonal} below demonstrates that the isolation number and the regularity are sharp estimates for the Boolean rank.

\begin{theorem}
    \label{thm: block diagonal}
Let $A$ be an $m\times n$ Boolean block diagonal matrix and let  $G$ be the corresponding bipartite graph. Then \begin{enumerate}
    \item The regularity of $R/I_G$ is the number of blocks.
    \item The Boolean rank of $A$ is the number of blocks.
    \item The isolation number of $A$ is the number of blocks.
\end{enumerate}
\end{theorem}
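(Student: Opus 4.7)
The plan is to squeeze all three quantities to the common value $r$, where $r$ is the number of blocks, via the chain of inequalities \eqref{eq: inequalities} together with Proposition \ref{prop: iso-brank}. I read the hypothesis as referring to \emph{solid} block diagonal matrices, in keeping with the spirit of the statement and the surrounding discussion; for non-solid blocks, Example~(A) above already shows that numerical equality in (2) can fail (its Boolean rank is $4$, not the number of blocks, $2$).

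First I would observe that when $A$ is solid block diagonal with blocks $B_1,\ldots,B_r$, the bipartite graph $G$ is a disjoint union $C_1 \sqcup \cdots \sqcup C_r$ of complete bipartite graphs on pairwise disjoint vertex sets, with $C_\ell$ corresponding to the all-ones block $B_\ell$. Two elementary estimates then fall out. The collection $\{C_1,\ldots,C_r\}$ is visibly a biclique cover of $G$, so $\bc(G) \leq r$. Selecting one edge $e_\ell$ from each $C_\ell$ produces an induced matching: the edges are pairwise vertex-disjoint since they lie in distinct components, and there are no additional edges between components, so $\indmatch(G) \geq r$. Feeding both estimates into the chain \eqref{eq: inequalities} yields
\[
r \leq \indmatch(G) \leq \reg(R/I_G) \leq \brank(A) = \bc(G) \leq r,
\]
forcing equality throughout and settling parts (1) and (2) in one stroke.

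For part (3), I would select one $1$-entry $a_{i_\ell j_\ell}$ from each block $B_\ell$ and check that the resulting set is isolated: for distinct indices $\ell \neq \ell'$, the two cross positions $a_{i_\ell j_{\ell'}}$ and $a_{i_{\ell'} j_\ell}$ lie off the block diagonal and hence vanish, which is precisely the isolation condition of Definition \ref{def: isolated}. This gives $\iota(A) \geq r$, and the matching upper bound $\iota(A) \leq \brank(A) = r$ is immediate from Proposition \ref{prop: iso-brank} combined with part (2).

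The main obstacle is essentially cosmetic: the problem decomposes cleanly along the block structure and every invariant becomes additive over components. Using the chain \eqref{eq: inequalities} to handle regularity is a slight shortcut that avoids having to prove directly the additivity of Castelnuovo--Mumford regularity over disjoint unions of graphs, which one could alternatively derive from the tensor-product decomposition $R/I_G \cong \bigotimes_{\ell} R_\ell/I_{C_\ell}$.
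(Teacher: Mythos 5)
Your proof is correct, and it takes a mildly but genuinely different route from the paper's on parts (1)--(2). The paper computes $\reg(R/I_G)$ exactly by writing $I_G=I_{B_1}+\cdots+I_{B_r}$ and invoking additivity of regularity over ideals in pairwise disjoint variable sets (each summand contributing $1$ by Lemma \ref{lem: regularity biclique}), and only then deduces (2) from $\reg(R/I_G)\leq\brank(A)\leq r$. You instead sandwich everything at once via
\[
r \leq \indmatch(G) \leq \reg(R/I_G) \leq \brank(A)=\bc(G)\leq r,
\]
using one edge per connected component to realize the induced matching. This buys you two things: you never need the (unproved in the paper) additivity of regularity over disjoint unions, relying only on the already-cited Lemma \ref{lem:indmatch}; and your argument is exactly the one the paper later deploys for the overlapping-block case, so the two theorems get a uniform treatment. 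Part (3) is the same in both proofs: one $1$ per block, cross-entries vanish off the block diagonal, upper bound from Proposition \ref{prop: iso-brank}. Your remark that the hypothesis must be read as \emph{solid} block diagonal is a genuine and worthwhile catch: the paper's own proof silently uses that each $B_i$ is an all-ones block (``the bipartite graph with biadjacency matrix $B_i$ is a biclique''), and as your Example~(A) computation shows, the statement is false for general block diagonal matrices.
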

\begin{proof}
(1) The edge ideal of $G$ is a sum of edge ideals corresponding to each block $B_i$
\[
I_G=I_{B_1}+\cdots+I_{B_r}.
\]
As the ideals $I_{B_1},\ldots, I_{B_r}$ involve pairwise disjoint sets of variables, one deduces 
\begin{equation}\label{eq: sum of reg}\
\reg(R/I_A)=\reg(R/I_{B_1})+\cdots+\reg(R/I_{B_r}).
\end{equation}
Since the bipartite graph with biadjacency matrix $B_i$ is a biclique we know by Lemma \ref{lem: regularity biclique} that $\reg(R/I_{B_i})$=1 for each block and hence the sum in \eqref{eq: sum of reg} is equal to the number $r$ of blocks.

(2) Each block corresponds to a biclique, so the number of bicliques necessary  to cover the graph is at most the number of blocks. That is, $\brank(A)={\rm bc}(A)\leq r$. 
By Theorem \ref{thm: reg} and part (1) we know that $r=\reg(R/I_G)\leq \brank(A)$. 
  Hence the Boolean rank must be the number of blocks of $A$.

(3) 
 We claim that the set formed by the entry 1 in the top right corner of each block is isolated. Indeed, fixing any two such 1's, the entry in the lower block has only zeros in its column above it. In particular, there is a zero entry in the upper right of the $2\times 2$ minor that contains the two fixed 1's. This gives us a set of isolated ones that is at least the number of blocks in the matrix $A$. We have shown $\iota(A)\geq r$.
However, by Proposition \ref{prop: iso-brank} and part (2)  we know that $\iota(A)\leq \brank(A)=r$. So we conclude that the isolation number must be exactly the number of blocks.
\end{proof}

\begin{example}
Take
\[
A=\begin{bNiceMatrix}[margin=2pt]
  \Block[tikz={red}]{2-3}{}1&1&\textcolor{magenta}{\bf 1}&0&0\\
  1&1&1&0&0\\
  0&0&0&\Block[tikz={red}]{2-2}{}1&\textcolor{magenta}{\bf 1}\\
  0&0&0&1&1
\end{bNiceMatrix}
\]
This matrix has $\reg(R/I_G)=\iota(A)=\brank(A)=2$. 
\end{example}
\subsection{Overlapping Block Matrices}
In this section we look at a broader family of block diagonal matrices. We call these matrices \textit{overlapping block matrices} and describe them as follows. 

\begin{definition}
An \textit{overlapping block diagonal matrix} is a matrix $A$ of the form 
\[
A=\begin{bmatrix}
    B_1 & & 0\\
    & \ddots &\\
    0& & B_r
\end{bmatrix}
\]
where all entries of $B_1, \ldots, B_r$ are 1 and all the other entries of $A$ equal 0. In particular, each block $B_i$ need not have disjoint rows and/or columns from either block $B_{i-1}$ or $B_{i+1}$.

A Boolean matrix can be considered an overlapping block matrix in multiple ways. Thus when considering an overlapping block matrix we mean the matrix together with a choice of blocks as described above. We indicate the choice of blocks in examples by dashed borders.
\end{definition}

The family of overlapping block diagonal matrices is extremely broad. In order to make precise statements about Boolean rank, regularity and isolation number, we must place some additional constraints.

\begin{definition}

A block diagonal matrix $A$ is \textit{column separated} if every block has a column that does not belong to another block.

A block diagonal matrix $A$ is \textit{row separated} if every block has a row that does not belong to any other block.
\end{definition}
\begin{example}
(A) is an example of a matrix that is not column or row separated. (B) is an example of a column separated matrix, but not row separated. (C) is an example of a row separated matrix, but not column separated.
\begin{center}
\begin{tabular}{ c c c }
 $\begin{bNiceMatrix}[margin=2pt] 
\Block[tikz={red}]{2-2}{}1 & 1 & 0 & 0\\
1 &  \Block[tikz={red}]{2-2}{}1 & 1 & 0\\
0 & 1 &  \Block[tikz={red}]{2-2}{}1 & 1\\
0 & 0 & 1 & 1 
\end{bNiceMatrix}$ & $\begin{bNiceMatrix}[margin=2pt]
 \Block[tikz={red}]{2-3}{}1&1&1&0&0&0\\
  1&1& \Block[tikz={red}]{2-3}{}1&1&1&0\\
  0&0&1&1& \Block[tikz={red}]{2-2}{}1&1\\
  0&0&0&0&1&1
 \end{bNiceMatrix}$ & $\begin{bNiceMatrix}[margin=2pt]
   \Block[tikz={red}]{2-2}{}1&1&0&0\\
  1& \Block[tikz={red}]{3-2}{}1&1&0\\
  0&1&1&0\\
  0&1& \Block[tikz={red}]{2-2}{}1&1\\
  0&0&1&1
 \end{bNiceMatrix}$ \\ 
 (A) & (B) & (C)  
\end{tabular}
\end{center}
\end{example}

The following is a generalization of Theorem \ref{thm: block diagonal}, thus enlarging the class of matrices for which the isolation number and the regularity are sharp estimates for the Boolean rank.

\begin{theorem}
Let $A$ be an $m\times n$ Boolean overlapping block diagonal matrix and let  $G$ be the corresponding bipartite graph. Further suppose that $A$ is  column separated and row separated. Then \begin{enumerate}
    \item The regularity of $R/I_G$ is the number of blocks.
    \item The Boolean rank of $A$ is the number of blocks.
    \item The isolation number of $A$ is the number of blocks.
\end{enumerate}
\end{theorem}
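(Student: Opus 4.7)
The plan is to prove the three equalities simultaneously by squeezing all relevant invariants between matching bounds of $r$, the number of blocks. Specifically, I would aim to establish
\[
r \leq \indmatch(G) \leq \reg(R/I_G) \leq \brank(A) = \bc(G) \leq r
\]
together with
\[
r \leq \iota(A) \leq \brank(A) \leq r,
\]
reducing the task to producing a biclique cover of $G$ of size $r$, an isolated set of size $r$, and an induced matching of size $r$.

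First, for the upper bound $\bc(G) \leq r$, I would use the definition directly: each block $B_i$ is an all-ones submatrix, so its support in $G$ is a biclique, and these $r$ bicliques cover every edge of $G$ since every nonzero entry of $A$ lies in some block. This yields $\brank(A) \leq r$ via \eqref{eq: bc=brank}.

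Next, for the lower bounds, the key is to exploit row- and column-separation to build the desired combinatorial witnesses. For each block $B_i$, I would select a row $\rho_i$ belonging only to $B_i$ and a column $\gamma_i$ belonging only to $B_i$. Then $a_{\rho_i \gamma_i}$ is $1$ (it lies in $B_i$), and for $i \neq j$ both cross entries $a_{\rho_i \gamma_j}$ and $a_{\rho_j \gamma_i}$ should vanish by the following observation: if $a_{\rho_i \gamma_j} = 1$ then some block $B_k$ contains both row $\rho_i$ and column $\gamma_j$, but by construction the only block containing $\rho_i$ is $B_i$ while the only block containing $\gamma_j$ is $B_j$, forcing the contradiction $i = k = j$. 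Once this vanishing is established, it simultaneously certifies that $\{a_{\rho_i \gamma_i} : 1 \leq i \leq r\}$ is an isolated set of size $r$ (giving $\iota(A) \geq r$) and that the edges $\{x_{\rho_i}, y_{\gamma_i}\}$ form an induced matching of size $r$ in $G$ (giving $\indmatch(G) \geq r$).

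Assembling everything via Proposition~\ref{prop: iso-brank}, Lemma~\ref{lem:indmatch}, and Theorem~\ref{thm: reg} will close all the inequalities at $r$. The main obstacle I anticipate is just the bookkeeping when blocks share rows and columns; the two separation hypotheses are precisely what preserves the block-diagonal argument from Theorem~\ref{thm: block diagonal}, and without them either the isolated set or the induced matching of size $r$ can fail, as the non-separated matrix (A) in the preceding example illustrates.
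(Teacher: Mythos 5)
Your proposal is correct and follows essentially the same route as the paper: cover $G$ by the $r$ block bicliques for the upper bound, and use the row- and column-separation hypotheses to extract an identity submatrix that simultaneously witnesses $\indmatch(G)\geq r$ and $\iota(A)\geq r$, then close the chain $\indmatch(G)\leq\reg(R/I_G)\leq\brank(A)=\bc(G)\leq r$. Your explicit verification that the cross entries $a_{\rho_i\gamma_j}$ vanish (because a nonzero entry must lie in some block, forcing $i=j$) is a detail the paper leaves implicit, but it is the same argument.
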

\begin{proof}
(1) We know that each block corresponds to a biclique. So the edge ideal of $A$ is a sum of edge ideals corresponding to each block $B_i$:
\[
I_A=I_{B_1}+\cdots+I_{B_r}.
\]
By Corollary \ref{2.9}, we know that 
\[
\reg(I_A)\leq \reg(I_{B_1})+\cdots+\reg(I_{B_r}).
\]
For each $I_{B_i}$ we know from \ref{lem: regularity biclique} that $\reg(I_{B_i})=1$. Hence, 
\begin{equation}\label{eq:1B}
\reg(I_A)\leq 1 + \cdots + 1 = r =\text{ number of blocks in A.}
\end{equation}

For the other inequality, recall from Lemma \ref{lem:indmatch} that 
\[
\text{indmatch}(G) \leq \reg(R/I_G).
\]
Note that an induced matching of a bipartite graph corresponds to an identity submatrix  of the adjacency matrix of the bipartite graph. Hence the induced matching number of $G$ is the largest size of an identity submatrix found in the adjacency matrix of the graph, $A(G)$. For overlapping block diagonal matrices that are both column separated and row separated, one can find an identity submatrix by choosing an entry of 1 from each block that is in the independent rows and columns of each block. By the above considerations and Lemma \ref{lem:indmatch} we have 
\begin{equation}\label{eq: 2}
 \reg(R/I_G) \geq \text{indmatch}(G)  \geq \text{number of blocks} .
\end{equation}

By \eqref{eq:1B} and \eqref{eq: 2} we have the regularity of $R/I_G$ is exactly the number of blocks, as desired.

(2) 
Putting together \eqref{eq: inequalities} and the fact that the $r$ blocks form a biclique cover of $G$, we have the sequence of inequalities
\[
    \text{indmatch(G)} \leq \reg(R/I_G) \leq  \brank(A) = \text{bc}(G)\leq r.
\]
From (1), we have  $\text{indmatch(G)} =r$, which yields equality throughout. In particular we have shown
\[
\text{number of blocks in $A$}=  \brank(A).
\]

(3) The proof is similar to part (3) of Theorem \ref{thm: block diagonal}. The crucial hypothesis allowing to find an isolated set of size $r$ as in that proof is that $A$ is both row and column separated.
\end{proof}
\begin{example}
Take
\[
A=\begin{bNiceMatrix}[margin=2pt]
  \Block[tikz={red}]{2-2}{} 1&1&0&0\\
  1& \Block[tikz={red}]{3-2}{}1&1&0\\
  0&1&1&0\\
  0&1& \Block[tikz={red}]{2-2}{}1&1\\
  0&0&1&1
 \end{bNiceMatrix}
\]
This matrix has $\reg(R/I_G)=\iota(A)=\brank(A)=3$. 
\end{example}

\subsection{Identity Complement Matrices}
The last family of matrices that we compute the estimates for is what we call \textit{identity complement matrices}. 
This family of matrices has been considered frequently, see \cite{deCaen}.

\begin{definition}
Define the  \textit{identity complement matrix} denoted $J$ to be the square matrix with entries 
\[
J_{ij}=\begin{cases} 
1 & \text{ if } i\neq j\\
0 & \text{ if } i= j.
\end{cases}
\]
\end{definition}


\begin{theorem}\label{thm: identity complement}
Let $A$ be an $n\times n$ Boolean identity complement matrix and let  $G$ be the corresponding bipartite graph. Then \begin{enumerate}
    \item The regularity of  $R/I_G$ is 2.

    \item The isolation number of $A$ is 3.
\end{enumerate}
\end{theorem}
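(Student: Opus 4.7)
The plan splits along the two assertions. For part (1), the lower bound $\reg(R/I_G) \geq 2$ is immediate from Lemma \ref{lem:indmatch}: the two edges $\{x_1, y_2\}$ and $\{x_2, y_1\}$ form an induced matching in $G$, since the induced subgraph on $\{x_1, x_2, y_1, y_2\}$ has only these two edges (the diagonal entries $a_{11} = a_{22} = 0$ preclude $\{x_1, y_1\}$ and $\{x_2, y_2\}$). For the upper bound I would apply Hochster's formula to the Stanley--Reisner complex $\Delta$ of $I_G$, which is the independence complex of $G$.

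The key combinatorial observation is that every independent set of $G$ is of exactly one of three types: a subset of $X$, a subset of $Y$, or a matched pair $\{x_i, y_i\}$. This is because any $x_i \in X$ and $y_j \in Y$ with $i \neq j$ are adjacent in $G$. Hence, for any $\sigma = X' \sqcup Y' \subseteq X \cup Y$, the induced subcomplex $\Delta|_\sigma$ is the union of the simplex on $X'$, the simplex on $Y'$, and the matching edges $\{x_i, y_i\}$ for indices $i$ with both $x_i \in X'$ and $y_i \in Y'$. Letting $t$ denote the number of such matching edges, when both $X'$ and $Y'$ are nonempty, $\Delta|_\sigma$ deformation retracts (by contracting the two simplices) onto a graph with two vertices and $t$ parallel edges --- a wedge of $t - 1$ circles if $t \geq 1$, or a disjoint union of two points if $t = 0$. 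If $X' = \emptyset$ or $Y' = \emptyset$, then $\Delta|_\sigma$ is a single simplex. In every case $\widetilde{H}_\ell(\Delta|_\sigma; k) = 0$ for $\ell \geq 2$, so Hochster's formula gives $\beta_{i, i+j}(R/I_G) = 0$ for $j \geq 3$, yielding $\reg(R/I_G) \leq 2$.

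For part (2), the lower bound $\iota(A) \geq 3$ is witnessed by the isolated set $\{a_{12}, a_{23}, a_{31}\}$: in each of the three associated $2 \times 2$ submatrices, one of the opposite-corner entries lies on the diagonal of $A$ and is therefore $0$. For the upper bound I would argue by contradiction: suppose $\{a_{i_k, j_k}\}_{k=1}^4$ is an isolated set. Then the $i_k$ are pairwise distinct, the $j_k$ are pairwise distinct, and $i_k \neq j_k$ for each $k$. Since $A$ vanishes exactly on the diagonal, the isolation condition for each pair $(p,q)$ forces $i_p = j_q$ or $i_q = j_p$. Starting from the condition for the pair $(1,2)$ and tracking the implications through the pairs $(1,3)$ and $(1,4)$, a short case analysis produces either two equal rows or two equal columns in every branch, a contradiction.

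The main obstacle is the upper bound in part (1): one must carefully identify the homotopy type of each induced subcomplex $\Delta|_\sigma$, including the degenerate cases where $X'$, $Y'$, or the matching edge set is empty. Once this is in hand, the vanishing of higher reduced homology is transparent and Hochster's formula closes the argument. Part (2) by contrast reduces to a short pigeonhole-style case analysis once the candidate triple $\{a_{12}, a_{23}, a_{31}\}$ has been spotted.
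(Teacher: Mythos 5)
Your argument is correct, but the upper bound in part (1) goes by a genuinely different route than the paper's. The paper decomposes the identity complement matrix into its strictly upper and strictly lower triangular parts, observes that each is a Ferrers tableau, and combines the Corso--Nagel result (\Cref{thm:FerrersIdealReg}) with subadditivity of regularity for monomial ideals (\Cref{thm: reg sum}) to get $\reg(R/I_G)\leq 1+1=2$; your proof instead computes the homotopy type of every induced subcomplex of the independence complex and invokes Hochster's formula. Your identification of the independent sets is right: any $x_i$ and $y_j$ with $i\neq j$ are adjacent, so the independence complex is two simplices (on $X$ and on $Y$) joined by the $n$ edges $\{x_i,y_i\}$, and collapsing the two contractible simplices (a homotopy equivalence, though strictly speaking a quotient rather than a deformation retract) leaves two points joined by $t$ parallel edges, whose reduced homology vanishes in degrees $\geq 2$. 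This buys more than the paper's argument -- you obtain all graded Betti numbers of $R/I_G$, not just the regularity bound, and you avoid citing the Ferrers machinery -- at the cost of losing the connection to Ferrers covers that motivates the paper's concluding section. The lower bound via the induced matching $\{x_1,y_2\},\{x_2,y_1\}$ and \Cref{lem:indmatch} is identical to the paper's. For part (2), your reformulation is sound: distinctness of the rows $i_k$ and columns $j_k$, together with the fact that $A$ vanishes exactly on the diagonal, turns isolation of the pair $(p,q)$ into ``$i_p=j_q$ or $i_q=j_p$,'' and since the columns $j_q$ are distinct at most one index $q$ can satisfy $i_1=j_q$, while distinctness of the rows lets at most one index $q$ satisfy $i_q=j_1$; so index $1$ cannot service all three pairs $(1,2),(1,3),(1,4)$, which completes the case analysis you sketch. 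This is essentially the paper's position-chasing argument in a cleaner symmetric form. One shared caveat: part (2) as stated implicitly assumes $n\geq 3$, since $\iota(J_2)=2$; your witness $\{a_{12},a_{23},a_{31}\}$ makes this assumption visible.
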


However, it is proven in \cite[Corollary 2]{deCaen} that the Boolean rank of $A$ is min$\left \{ k : n \leq \displaystyle\binom{k}{\frac{k}{2}} \right \}$ .

To prove Theorem \ref{thm: identity complement}.1, first we must establish a useful tool, called  the \textit{Ferrers graph}.
\begin{definition}
A \textit{Ferrers graph} is a bipartite graph on two distinct vertex sets $X = \{x_1,...,x_n\}$ and $Y = \{y_1,...,y_m\}$ such that  $\{x_1,y_m\}$ and $\{x_n,y_1\}$ are required to be edges of $G$ and if $\{x_i,y_j\}$ is an edge of $G$, then so is $\{x_p,y_q\}$ for $1\leq p\leq i$ and $1\leq q\leq j$.
\end{definition}

  The name is a reference to fact that a biadjacency matrix $A$ represents a Ferrers graph if the nonzero entries of $A$ form a \textit{Ferrers tableau}.

Corso and Nagel \cite{NagelCorso} have studied the algebraic properties of the edge ideal $I = I(G)$ associated to a Ferrers graph $G$.
 
\begin{theorem}\cite[Thm.\,4.2]{NagelCorso}\label{thm:FerrersIdealReg}
Let $G$ be a bipartite graph without isolated vertices. Then its edge ideal, $I_G$, has regularity 2 (equivalently, $\reg(R/I_G)=1$)  if and only if $G$ is a Ferrers graph.
\end{theorem}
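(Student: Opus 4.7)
The plan is to prove the two implications using a combination of the induced matching bound (Lemma \ref{lem:indmatch}) for the forward direction and Fr\"oberg's theorem for the backward direction. Fr\"oberg's theorem characterizes squarefree quadratic monomial ideals whose edge ideal has a linear resolution---equivalently, $\reg(I_G) = 2$---as those for which the complement graph is chordal. I interpret the statement ``$G$ is a Ferrers graph'' as allowing a relabeling of the vertices of $X$ and $Y$, since the Ferrers condition is defined relative to a specified ordering.

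For the forward direction, assume $\reg(I_G) = 2$, equivalently $\reg(R/I_G) = 1$. Lemma \ref{lem:indmatch} immediately forces $\indmatch(G) \leq 1$, so $G$ contains no induced $2K_2$. I then claim that a bipartite graph with no induced $2K_2$ is, after a suitable relabeling, a Ferrers graph. Ordering $X = \{x_1, \ldots, x_n\}$ by decreasing degree, the absence of an induced $2K_2$ forces the neighborhoods $N(x_i)$ to be totally ordered by inclusion: if $N(x_i)$ and $N(x_j)$ were incomparable, picking $y \in N(x_i) \setminus N(x_j)$ and $y' \in N(x_j) \setminus N(x_i)$ would exhibit an induced $2K_2$ on $\{x_i, x_j, y, y'\}$. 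Ordering $Y$ similarly by decreasing degree and using that $G$ has no isolated vertices then exhibits $G$ as a Ferrers graph, with $x_1$ having full neighborhood $Y$ (so $\{x_1, y_m\} \in E$) and $x_n$ connected to $y_1$.

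For the backward direction, assume $G$ is a Ferrers graph. Then $G$ has no induced $2K_2$, by the nested neighborhood structure inherent in the staircase condition. I claim this forces $\bar{G}$ (the complement in the complete graph on $X \sqcup Y$) to be chordal. Any induced cycle of length at least $4$ in $\bar{G}$ can contain at most two vertices from each side of the bipartition: three vertices on the same side lie in the clique $K_X$ or $K_Y$ of $\bar{G}$ and hence force a chord. So any induced cycle of length $\geq 4$ in $\bar{G}$ has length exactly $4$, with two vertices on each side, and the chordlessness conditions translate under parity back to an induced $2K_2$ in $G$. Hence $\bar{G}$ has no induced $C_k$ for $k \geq 4$, i.e., $\bar{G}$ is chordal, and Fr\"oberg's theorem yields $\reg(I_G) = 2$.

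The principal difficulty is the bookkeeping in the forward direction: showing that simultaneously ordering both $X$ and $Y$ by decreasing degree yields the Ferrers staircase, including confirming the boundary conditions $\{x_1, y_m\}, \{x_n, y_1\} \in E(G)$ hold and handling ties in degree. The backward direction is cleaner once the key observation is made---that induced cycles of length $\geq 5$ in $\bar{G}$ are impossible purely for bipartite reasons---reducing chordality of $\bar{G}$ to the absence of induced $2K_2$ in $G$, which is immediate for Ferrers graphs.
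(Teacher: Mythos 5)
The paper does not prove this statement---it is imported verbatim from Corso--Nagel \cite[Thm.\,4.2]{NagelCorso}---so there is no internal proof to compare against; what can be said is that your argument is correct and is essentially the standard route to this result. Both directions check out: from $\reg(R/I_G)=1$ and Lemma \ref{lem:indmatch} you get $\indmatch(G)\leq 1$, and a bipartite graph with no induced $2K_2$ is exactly a chain/difference graph, i.e.\ its neighborhoods on each side are totally ordered by inclusion, which (using the no-isolated-vertices hypothesis to force $N(x_1)=Y$ and $N(y_1)=X$ after sorting by degree) is a Ferrers graph up to relabeling---and the relabeling caveat you flag is genuinely needed, since the paper's definition of Ferrers graph fixes an ordering, so the theorem is only true as an ``isomorphic to a Ferrers graph'' statement. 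Your reduction of chordality of $\bar G$ to the absence of induced $2K_2$ in $G$ (induced cycles of length $\geq 5$ in $\bar G$ being killed by the cliques $K_X$, $K_Y$) is also correct, and Fr\"oberg's theorem plus the standard fact that an ideal generated in degree $2$ has a linear resolution if and only if its regularity equals $2$ closes the loop; you use this last equivalence implicitly and could state it once. The comparison worth recording: Corso and Nagel prove the ``if'' direction by constructing an explicit minimal (cellular, Eliahou--Kervaire-type) free resolution of the Ferrers ideal, which yields all graded Betti numbers and not merely the regularity, and then handle the converse combinatorially; your Fr\"oberg-based route is shorter and self-contained but recovers only the regularity statement, which is all the present paper needs.
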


A particular case appears in Lemma \ref{lem: regularity biclique} since bicliques are a type of Ferrers graph that corresponds to a rectangular Ferrers tableau.

\begin{proof}[Proof of Theorem 5.11]
(1) An identity complement matrix is comprised of two Ferrers diagrams, outlined in  an example.
\[\begin{tikzpicture}

\matrix[matrix of math nodes,left delimiter = {[},right delimiter = {]},row sep=10pt,column sep = 10pt] (m)
 {
  0 & 1 & 1 & 1 \\
  1 & 0 & 1  & 1 \\
  1 & 1 & 0  & 1 \\
  1 & 1 & 1  & 0 \\
 };

\begin{pgfonlayer}{background}
 \node[inner sep=3pt,fit=(m-1-2)]          (1)   {};
 \node[inner sep=3pt,fit=(m-1-3) (m-2-4)]  (2)   {};
 \node[inner sep=3pt,fit=(m-2-3) (m-2-4)]  (3)   {};
 \node[inner sep=3pt,fit=(m-3-4)]          (4)   {};
 \draw[red, rounded corners,dashed,inner sep=3pt,fill opacity=0.1] (1.north west) -- (2.north east) |- (4.south west)|- (3.south west) |- (2.south west) |- (1.south west) -- cycle;
\end{pgfonlayer}

\begin{pgfonlayer}{background}
 \node[inner sep=3pt,fit=(m-2-1)]          (1)   {};
 \node[inner sep=3pt,fit=(m-3-1) (m-2-2)]  (2)   {};
 \node[inner sep=3pt,fit=(m-3-2)]  (3)   {};
 \node[inner sep=3pt,fit=(m-4-3)]          (4)   {};
 \draw[red, rounded corners,dashed,inner sep=3pt,fill opacity=0.1] (1.north east) -- (2.north west) |- (4.south east)|- (3.south east) |- (2.south east) |- (1.south east) -- cycle;
\end{pgfonlayer}
\end{tikzpicture}\]
Thus we have $I_G= F_1 + F_2$ for Ferrers ideals $F_i$. By Proposition \ref{thm: reg sum} and Lemma \ref{lem: regularity biclique}, we have that 
\[
\reg(R/I)\leq \reg(R/F_1)+\reg(R/F_2)=1+1=2.
\]

For the other direction, we will make use of Lemma \ref{lem:indmatch}. Observe that for any $i\neq j$ the edges $\{x_i,y_j\}$ and $\{x_j,y_i\}$ form an induced matching as rows $i,j$ and columns $i,j$ of $J$ form an identity submatrix.   Hence we obtain $2\leq \text{indmatch}(G)\leq \reg(R/I_G)$ which together with the previous arguments gives $\reg(R/I_G)=2$, as desired.

(2) Consider any one in position $(i,j)$. If $a_{ij}, a_{k\ell}$ is an isolated  pair, then we must have $k=j$ or $\ell=i$. Otherwise,  if $k\leq j$ and $\ell\neq i$ it follows  from the definition of $A$ that $a_{kj}=1=a_{i\ell}$, thus $a_{kj}\wedge a_{i\ell}=1$, contradicting that $a_{ij}, a_{k\ell}$ are isolated ones. Suppose a second isolated one is in row $j$, in position $(j,k)$. Then any one isolated to the $a_{jk}$ will be in either row $k$ or column $j$. But it must also be isolated to $a_{ij}$ so it must also be in row $j$ or column $i$. But note that $j\neq k$, $i\neq j$ and $a_{jj}=0$. So the third isolated one must be in position $(k,i)$. There is only one choice here so we cannot have a fourth isolated one. 
\end{proof}
 
\begin{example}\label{ex: anti id example}
Consider
\[
A=\begin{bmatrix}
    0&1&1&1&1&1\\
    1&0&1&1&1&1\\
    1&1&0&1&1&1\\
    1&1&1&0&1&1\\
    1&1&1&1&0&1\\
    1&1&1&1&1&0
\end{bmatrix}. 
\]
This matrix has $\reg(R/I_G)=2$, $\iota(A)=3$ and $\brank(A)=4$, showing that both $\reg(R/I_G)$ and $\iota(A)$ as lower bounds for the Boolean rank need not be tight. 

In fact, denoting by $J_n$ the $n\times n$ identity complement matrix, one sees from Theorem \ref{thm: identity complement} that the  Boolean rank of $J_n$ grows unboundedly in $n$, while the corresponding regularity and isolation numbers stay constant. This shows that the isolation number and the regularity can be arbitrarily far from the Boolean rank.
\end{example}

\section{Conclusion}
The gap between regularity and Boolean rank in \Cref{thm: identity complement} stems from the result of Corso and Nagel (Theorem~\ref{thm:FerrersIdealReg}), which suggests a refinement of Theorem~\ref{thm: reg}. Denote by $\fc(G)$ the \emph{Ferrers cover number} of a bipartite graph $G$, defined analogously to the biclique cover number $\bc(G)$ (Definition~\ref{def:biclique}), as the smallest number of Ferrers graphs which can be used to cover all edges of $G$. As bicliques are Ferrers graphs, we get the inequality $\fc(G) \leq \bc(G)$ for a bipartite graph $G$.
The proof of Theorem~\ref{thm: reg}, \emph{mutatis mutandis}, (i.e., employing Theorem~\ref{thm:FerrersIdealReg} instead of Lemma \ref{lem: regularity biclique}) then shows that 
\begin{equation}\label{eq: ferrers inequality}
    \reg(R/I_G) \leq \fc(G).
\end{equation} 
This leads to the interesting problem of estimating how tight the inequality above bounds the Ferrers cover number,
which we propose for further investigation.

\begin{question}
    Can the difference between the two invariants in \eqref{eq: ferrers inequality} be arbitrarily large when $G$ ranges among all bipartite graphs (on a fixed number of vertices)?
\end{question}

\bibliographystyle{amsalpha}
\bibliography{biblio}

\end{document}